\theoremstyle{plain}
\newtheorem{thm}{\protect\theoremname}
\theoremstyle{plain}
\newtheorem{cor}{\protect\corollaryname}
\theoremstyle{plain}
\newtheorem{prop}{\protect\propositionname}
\theoremstyle{plain}
\newtheorem{lem}{\protect\lemmaname}
\providecommand{\corollaryname}{Corollary}
\providecommand{\lemmaname}{Lemma}
\providecommand{\propositionname}{Proposition}
\providecommand{\theoremname}{Theorem}
\begin{document}

\title{Nearly Optimal Robust Mean Estimation\\
	via Empirical Characteristic Function\thanks{This work is supported in part by Semiconductor Research Corporation (SRC) and DARPA.}}
\author{Sohail Bahmani\footnote{School of Electrical \& Computer Engineering, Georgia Institute of Technology, Atlanta, GA 30332.}\\
	\href{mailto:sohail.bahmani@ece.gatech.edu}{\texttt{sohail.bahmani@ece.gatech.edu}}}
\maketitle
\begin{abstract}
	We propose an estimator for the mean of random variables in separable real Banach spaces using the empirical characteristic function. Assuming that the covariance operator of the random variable is bounded in a precise sense, we show that the proposed estimator achieves the optimal sub-Gaussian rate, except for a faster decaying mean-dependent term. Under a mild condition, an iterative refinement procedure can essentially eliminate the mean-dependent term and provide a shift-equivariant estimate. Our results particularly suggests that a certain Gaussian width that appears in the best known rate in the literature might not be necessary. Furthermore, using the boundedness of the characteristic functions, we also show that, except possibly at high signal-to-noise ratios, the proposed estimator is gracefully robust against adversarial ``contamination''. Our analysis is overall concise and transparent, thanks to the tractability of the characteristic functions.
\end{abstract}

\section{Introduction}
Let $X,X_{1},\dotsc,X_{n}$ be independent and identically distributed
(i.i.d) random variables in a separable real Banach space $\left(\mbb B,\norm{\cdot}\right)$,
whose common law is $\mathcal{P}$. Let $\left(\mbb B^{*},\norm{\cdot}_{*}\right)$
denote the dual space for $\left(\mbb B,\norm{\cdot}\right)$. Furthermore,
denote the mean and the covariance operator for $\mc P$ respectively
by $\mu_{\star}$ and $\tg{\varSigma}\,:\,\mbb B^{*}\to\mbb B$, i.e.,
\begin{align*}
	\tg{\mu} & =\E\left(X\right)\,,
\end{align*}
and
\begin{align*}
	\tg{\varSigma}w & =\mathbb{E}\left(w\left(X-\tg{\mu}\right)\left(X-\tg{\mu}\right)\right)\,, & \text{for all linear functionals}\ w\in\mbb B^{*}\,.
\end{align*}

Our goal is accurate estimation of the mean of $\mc P$ (i.e., $\tg{\mu}$)
from the observed samples $X_{1},\dotsc,X_{n}$, only assuming that
the covariance operator $\tg{\varSigma}$ is \emph{nuclear}. For our purposes, the covariance operator $\tg{\varSigma}$ is nuclear,
if there exists a sequence $\left(z_{i}\right)_i$ in $\mbb B$ such
that $\tg{\varSigma}w=\sum_{i=1}^{\infty}w\left(z_{i}\right)z_{i}$
for all $w\in\mbb B^{*}$, and $\sum_{i=1}^{n}\norm{z_{i}}^{2}<\infty$; for a fully rigorous treatment, we refer the interested readers to \citep[pp. 107, and Ch. III]{VTC87}. In particular, the operator norm of $\tg{\varSigma}$, defined in
the usual sense as
\begin{align*}
	\norm{\tg{\varSigma}}_{\mr{op}} & =\sup_{w\,:\,\norm{w}_{*}\le1}\norm{\tg{\varSigma}w}\,,
\end{align*}
is assumed to be finite.

If $X$ is Gaussian, it follows from the concentration inequality for suprema of Gaussian processes (see, e.g., \citep[Theorem 5.8]{BLM13}) that the sample mean $\hat{\mu}=\sum_{i=1}^n X_i/n$ achieves the rate
\begin{align*}
	\norm{\hat{\mu} - \mu_\*} & \le \E\norm{\frac{1}{n}\sum_{i=1}^n X_i - \mu_\*} + \sqrt{\frac{2\norm{\varSigma_\*}_\mr{op}\log(1/\delta)}{n}}\,.
\end{align*}
The main question is whether there is an estimator $\hat{\mu}$ that attains the above \emph{purely sub-Gaussian rate}, up to constant factors, in a more general setting where $X$ is only known to have a finite second moment. Using the standard \emph{symmetrization} lemma (see, e.g., \citep[Lemma 6.4.2]{Ver18} or \citep[Lemma 2.3.6]{vDVW96}), the desired purely sub-Gaussian rate can be equivalently expressed in terms of a certain \emph{Rademacher complexity} denoted by
\begin{align}
	C_{n} & \defeq\frac{1}{\sqrt{n}}\,\E\left(\norm{\sum_{i=1}^{n}\varepsilon_{i}\left(X_{i}-\tg{\mu}\right)}\right)\,,\label{eq:Rademacher-complexity}
\end{align}
where $\varepsilon_{1},\dotsc,\varepsilon_{n}\in\left\{ \pm1\right\} $
are i.i.d. Rademacher random variables independent of the other random
variables. Specifically, an ideal estimator $\hat{\mu}$ would satisfy
\begin{align}
	\norm{\hat{\mu} -\mu_\*} & \lesssim \frac{C_n+\sqrt{\norm{\varSigma_\*}_\mr{op}\log(1/\delta)}}{\sqrt{n}}\,,\label{eq:ideal}
\end{align}
where here and throughout $A\lesssim B$ and $A\gtrsim B$, respectively
denote $cA\le B$ and $A\ge cB$ with $c>0$ being a constant.\footnote{We may also use the variants $\lesssim_q$ and $\gtrsim_q$ to signify the dependence of the hidden constant factor on a parameter $q$.}

It is well-known that the (raw) moments of a random variable, should
they exist, can be obtained from the derivatives of the corresponding
\emph{characteristic function} at the origin. In particular, the gradient
of the characteristic function at the origin is equal to the mean
of the random variable multiplied by the imaginary unit $\imath$.
Therefore, it is natural to consider mean estimators based on the
\emph{empirical characteristic function}. The empirical characteristic
function has a sharp concentration around the true characteristic
function, as it is an average of i.i.d. bounded and Lipschitz-continuous
functions. However, the gradient of the empirical characteristic function
need not be well concentrated. Inspired by the multivariate mean estimator of \citet{LM19c} that implicitly
uses convex conjugate functions to formulate a multidimensional median-of-means
estimator, we propose a mean estimator using the empirical characteristic
function that does not involve explicit differentiation. We show that the
proposed estimator achieves the optimal sub-Gaussian rate up to a
mean-dependent term of higher order incurred due to the lack of \emph{shift-equivariance}. Furthermore, we show that, under certain conditions that the lack of shift-equivariance impose, the proposed
estimator has a natural robustness against adversarial modifications
of the samples. We also provide an iterative scheme that effectively makes the estimator shift-equivariant in the non-adversarial setting.

\section{The Mean Estimator and Its Properties}

Denote the characteristic function of $\mc P$ by $\varphi\,:\,\mbb B^{*}\to\mbb C$,
explicitly defined as
\begin{align*}
	\varphi\left(w\right) & \defeq\E\left(e^{\imath\,w\left(X\right)}\right)\,,
\end{align*}
for every linear functional $w\in\mbb B^{*}$. Similarly, denote
the empirical characteristic function by
\begin{align*}
	\varphi_{n}(w) & \defeq\frac{1}{n}\sum_{j=1}^{n}e^{\imath\,w\left(X_{j}\right)}\,.
\end{align*}
For a prescribed parameter $r_{n}>0$, that may depend on $n$, we
study the estimator $\hat{\mu}$, defined using the imaginary part of $\varphi_n(\cdot)$ as
\begin{align}
	\hat{\mu}\ \in\  & \argmin_{\mu}\ \sup_{w\st\norm{w}_{*}\le r_{n}}\ r_{n}^{-1}\Bigl|w\left(\mu\right)-\underbrace{\frac{1}{n}\sum_{i=1}^n\sin(w(X_i))}_{=\mr{Im}\left(\varphi_{n}\left(w\right)\right)}\Bigr|\,.\label{eq:mean-estimator}
\end{align}

The proposed estimator is reminiscent of the mean estimators studied in \citep{Cat12}, \citep{CG17}, and \citep{Min18} that use some specific forms of nonlinear \emph{influence function} to reduce the effect of the heavy-tailed distribution. Let us consider the simpler case of mean estimation over the real line (i.e., $(\mbb B,\norm{\cdot})=(\mbb R, |\cdot|)$), and contrast with the univariate mean estimator due to \citet{Cat12}. Catoni's univariate estimator with a parameter $\alpha>0$, is designed to be shift-equivariant and can be expressed as
\begin{align}
	\hat{\mu}_\mr{C} & = \argmin_{\mu} \frac{1}{n}\sum_{i=1}^n \ell(\alpha(X_i - \mu))\,,\label{eq:Catoni}
\end{align}
where $\ell\st\mbb R\to \mbb R$ is a \emph{convex} loss function that is implicitly defined using its derivative (i.e., the influence function), required to satisfy
\begin{align}
	-\log(1-t+t^2/2) \le \ell'(t) \le \log(1+t+t^2/2)\,,\label{eq:influence-func}
\end{align}
for all $t\in\mbb R$. Clearly, $\ell'(\cdot)$ is non-decreasing due to the convexity of $\ell(\cdot)$. Leveraging these bounds on the influence function, the exponential moments for the terms appearing in the stationarity condition of \eqref{eq:Catoni} are bounded and used in a PAC-Bayesian argument in \citep{Cat12} to show,  for an appropriate choice of the parameter $\alpha$, the optimal sub-Gaussian accuracy of $\hat{\mu}_{\mr C}$, that is,
\begin{align*}
	|\hat{\mu}_{\mr{C}} - \tg{\mu}| & \lesssim \sqrt{\frac{\var(X)\log(2/\delta)}{n}}\,,
\end{align*}
holds with probability at least $1-\delta$.

The mechanism behind \eqref{eq:mean-estimator} is quite different, as is evident from the lack of shift-equivariance and the fact that $\sin(\cdot)$ is not monotonic. As mentioned in the previous section, the true mean can be viewed as the derivative of $w\mapsto\mr{Im}(\varphi(w)) = \E\sin(w X)$ at $w =0$. Equivalently, we can say the true mean $\tg{\mu}$ determines the \emph{best linear approximation} of $\mr{Im}(\varphi(w))$ in an arbitrarily small neighborhood of the origin $w=0$, that is, for a sufficiently small $r\ge0$ we have
\begin{align*}
	\sup_{|w|\le r}|w \tg{\mu}-\E\sin(w X)| & \le \sup_{|w|\le r}|w \mu -\E\sin(w X)| + o(r)\,,
\end{align*}
for any $\mu \in \mbb R$. Our proposed estimator uses this principle in the finite sample setting to approximate $\tg{\mu}$, using the empirical process $\sum_{i=1}^n\sin(w X_i)/n$ instead of $\E\sin(w X)$. The (multivariate) mean estimator proposed by \citet{CG17} is also effectively using the same principle for a certain monotonic influence function satisfying \eqref{eq:influence-func}. It is straightforward to show that as $r\downarrow 0$, the estimate $\hat{\mu}$ converges to the empirical mean. Therefore, to avoid the poor performance of the empirical mean under heavy-tailed distributions, we need to choose $r=r_n>0$ carefully to trade-off between robustness and accuracy. The desired sub-Gaussian behavior, up to the inevitable side effect of not being shift-equivariant, can be expected because of the sharp uniform concentration of the $\sum_{i=1}^n\sin(w X_i)/n$ around $\E\sin(w X)$.

The $\sin(\cdot)$ function, though not monotonic, has some properties that are shared with the influence functions considered in \citep{Cat12,CG17,Min18} such as obeying \eqref{eq:influence-func}, and being Lipschitz-continuous and approximately linear near the origin. These properties are important in our analysis as well. However, the $\sin(\cdot)$ function, or more precisely the complex exponential function, has another distinctive property with an equally important implication in our analysis. Specifically, the deviation of the empirical characteristic function, $\varphi_n(\cdot)$, from the true characteristic function, $\varphi(\cdot)$, remains invariant if the samples are translated by the same but otherwise arbitrary vector. This simple property is crucial in Proposition \ref{prop:CF-concentration} to achieve concentration bounds that are \emph{shift-invariant}. The effect of this property on the overall accuracy is that we only incur a $\norm{\tg{\mu}}$-dependent term of order $n^{-2/3}$ due to the lack of shift-equivariance in \eqref{eq:mean-estimator}, whereas accuracy of the estimator in \citep{CG17}, that also lacks shift-equivariance, depends on the second raw moment and effectively suffers from a $\norm{\tg{\mu}}$-dependent term  decaying at a slower rate of $n^{-1/2}$.

We show in the following theorem that the proposed multivariate mean estimator \eqref{eq:mean-estimator}
achieves a near-optimal rate for a specific choice of $r_{n}$.

\begin{thm}
	Recall the definition of $C_n$ in \eqref{eq:Rademacher-complexity}. For any confidence level $\delta\in\left(0,1\right)$, and any choice of the parameter $r_n>0$ in \eqref{eq:mean-estimator}, which defines the estimator $\hat{\mu}$, we have
	\begin{equation}
		\begin{aligned}
			\norm{\hat{\mu}-\tg{\mu}} & \le\frac{24\,C_{n}}{\sqrt{n}}+\sqrt{\frac{8\norm{\tg{\varSigma}}_{\mr{op}}\log\left(1/\delta\right)}{n}}+\frac{16\log\left(1/\delta\right)}{3nr_{n}} \\
			                          & \hphantom{\le}+\frac{1}{3}r_{n}^{2}\norm{\tg{\mu}}^{3}+r_{n}\norm{\tg{\varSigma}}_{\mr{op}}\,,
		\end{aligned}\label{eq:master}
	\end{equation}
	with probability at least $1-\delta$.

	In particular, given a prescribed level of accuracy $\epsilon>0$ that obeys
	\begin{align}
		\epsilon & \ge\max\left\{ \frac{96\,C_{n}+12\sqrt{\norm{\tg{\varSigma}}_{\mr{op}}\log\left(1/\delta\right)}}{\sqrt{n}},9\left(\frac{\log\left(1/\delta\right)}{n}\right)^{2/3}\norm{\tg{\mu}}\right\} \,,\label{eq:accuracy-level}
	\end{align}
	with $r_{n}$ chosen as
	\begin{align*}
		r_{n} & =\frac{22\log\left(1/\delta\right)}{n\epsilon}\,,
	\end{align*}
	the estimator $\hat{\mu}$ meets the prescribed accuracy, that is,
	\begin{align*}
		\norm{\hat{\mu}-\tg{\mu}} & \le\epsilon\,,
	\end{align*}
	with probability at least $1-\delta$.\label{thm:main}
\end{thm}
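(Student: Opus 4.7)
My plan is to derive an oracle inequality relating $\norm{\hat{\mu}-\tg{\mu}}$ to the objective in \eqref{eq:mean-estimator} evaluated at $\tg{\mu}$, then split that objective into a deterministic bias and a stochastic fluctuation and bound each. Let $L_n(\mu)\defeq\sup_{\norm{w}_*\le r_n}r_n^{-1}|w(\mu)-\mr{Im}\,\varphi_n(w)|$. The Hahn--Banach description of the norm gives $\norm{\hat{\mu}-\tg{\mu}}=\sup_{\norm{w}_*\le r_n}r_n^{-1}|w(\hat{\mu}-\tg{\mu})|$; adding and subtracting $\mr{Im}\,\varphi_n(w)$ and using the optimality $L_n(\hat{\mu})\le L_n(\tg{\mu})$ from \eqref{eq:mean-estimator} yields $\norm{\hat{\mu}-\tg{\mu}}\le 2\,L_n(\tg{\mu})$. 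A further triangle inequality gives $L_n(\tg{\mu})\le B+F$, where $B\defeq\sup_{\norm{w}_*\le r_n}r_n^{-1}|w(\tg{\mu})-\mr{Im}\,\varphi(w)|$ is a deterministic bias and $F\defeq\sup_{\norm{w}_*\le r_n}r_n^{-1}|\mr{Im}(\varphi_n-\varphi)(w)|$ is a stochastic fluctuation.

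\textbf{Bias.} Setting $u=w(\tg{\mu})$ and $V=w(X-\tg{\mu})$ with $\E V=0$, Taylor's theorem gives $\sin(u+V)=\sin u+V\cos u-\frac{V^2}{2}\sin c$ for some random $c$, whence $\E\sin(u+V)=\sin u-\frac12\E[V^2\sin c]$ and $|\sin u-\E\sin(u+V)|\le\E V^2/2$. Combined with the elementary bound $|u-\sin u|\le|u|^3/6$, this yields $|w(\tg{\mu})-\mr{Im}\,\varphi(w)|\le|u|^3/6+\E V^2/2\le r_n^3\norm{\tg{\mu}}^3/6+r_n^2\norm{\tg{\varSigma}}_{\mr{op}}/2$, using $\E V^2\le\norm{w}_*^2\norm{\tg{\varSigma}}_{\mr{op}}$. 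Dividing by $r_n$ and doubling reproduces the last two summands of \eqref{eq:master}.

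\textbf{Fluctuation.} The identity $\varphi_n(w)-\varphi(w)=e^{\imath w(\tg{\mu})}(\widetilde{\varphi}_n(w)-\widetilde{\varphi}(w))$, where $\widetilde{\varphi}_n$ and $\widetilde{\varphi}$ are the characteristic functions associated with the centered variables $Y_i=X_i-\tg{\mu}$, gives $|\mr{Im}(\varphi_n-\varphi)|\le|\widetilde{\varphi}_n-\widetilde{\varphi}|$, so $F$ is controlled by a shift-invariant empirical process in centered data. Proposition~\ref{prop:CF-concentration} then supplies the concentration bound via Talagrand--Bousquet (with variance slot $\E|e^{\imath w(Y)}-\E e^{\imath w(Y)}|^2\le r_n^2\norm{\tg{\varSigma}}_{\mr{op}}$ from $|e^{\imath t}-1|\le|t|$, and sup-norm $2$) paired with symmetrization and Ledoux--Talagrand contraction applied to the $1$-Lipschitz maps $\sin(\cdot)$ and $\cos(\cdot)-1$, yielding an expected supremum of a constant multiple of $r_n C_n/\sqrt{n}$. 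Division by $r_n$ and the oracle factor $2$ then produce the first three summands of \eqref{eq:master}.

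\textbf{Specialization and main obstacle.} Plugging $r_n=22\log(1/\delta)/(n\epsilon)$ into \eqref{eq:master} reduces each of the five summands to an explicit fraction of $\epsilon$ under the two lower bounds of \eqref{eq:accuracy-level} (the first jointly controls the $C_n$ and $\sqrt{\norm{\tg{\varSigma}}_{\mr{op}}\log(1/\delta)}$ pair as well as the $r_n\norm{\tg{\varSigma}}_{\mr{op}}$ term; the second controls the $r_n^2\norm{\tg{\mu}}^3$ term; the $\log(1/\delta)/(nr_n)$ term is a purely numerical fraction), and summing those fractions stays strictly below one. The main obstacle is the fluctuation step: a naive Bernstein bound on the variance of $\sin w(X)$ would include a $w(\tg{\mu})^2$ contribution and pollute the sub-Gaussian rate; the shift-invariance identity above is exactly what decouples $\tg{\mu}$ from the concentration so that only $\norm{\tg{\varSigma}}_{\mr{op}}$ appears inside the square root. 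Once this reduction is secured, the contraction plus Talagrand chain is routine.
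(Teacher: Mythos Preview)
Your proposal is correct and follows essentially the same route as the paper: the oracle inequality $\norm{\hat{\mu}-\tg{\mu}}\le 2L_n(\tg{\mu})$ is exactly Proposition~\ref{prop:master-prop}, the bias/fluctuation split matches \eqref{eq:decomposition}, your Taylor bounds reproduce the two applications of Lemma~\ref{lem:sin-linear-approximation} (with $(p,q)=(1,1)$ and $(p,q)=(1,0)$), and your fluctuation analysis via the shift-invariance identity, Bousquet, symmetrization, and contraction is precisely Proposition~\ref{prop:CF-concentration}. The specialization step and your identification of the shift-invariance of $|\varphi_n-\varphi|$ as the key device also coincide with the paper's emphasis.
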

A variation of Theorem \ref{thm:main} can also provide accuracy guarantees
for the situation where only moments of order $1+\tau$ with $\tau\in\left(0,1\right)$
are known to exist. To establish this variation, we only need to use
different approximations of $\sin(\cdot)$ to adapt the proof of
Theorem \ref{thm:main} to the weaker moment assumption. More specifically, Proposition \ref{prop:CF-concentration} can be
easily modified to depend only on the moments of order strictly less
than two, and Lemma \ref{lem:sin-linear-approximation} can be invoked
with other appropriate choices of the parameters $p$ and $q$. For the sake of simpler exposition, we do not pursue these details
in this paper.

As a concrete example, let us consider the special case where $\mbb B$
is a separable Hilbert space and $\norm{\cdot}$ is induced by the
corresponding inner product. Recall that the covariance operator $\tg{\varSigma}$
is assumed to be nuclear. Thus, it has a finite \emph{trace} defined
as
\begin{align*}
	\tr\left(\tg{\varSigma}\right) & =\sum_{i=1}^{\infty}\inp{\psi_{i},\tg{\varSigma}\psi_{i}}\,,
\end{align*}
for some orthonormal basis $\left\{ \psi_{i}\right\} _{i=1}^{\infty}$
of $\mbb B$. With this definition of the trace, and using the Cauchy-Schwarz
inequality, we can upper bound $C_{n}$ as
\begin{align*}
	C_{n} & \le\sqrt{\frac{1}{n}\E\left(\norm{\sum_{i=1}^{n}\varepsilon_{i}\left(X_{i}-\tg{\mu}\right)}^{2}\right)} \\
	      & =\sqrt{\frac{1}{n}\,n\E\left(\norm{X-\tg{\mu}}^{2}\right)}                                              \\
	      & =\sqrt{\tr\left(\tg{\varSigma}\right)}\,.
\end{align*}
Therefore, Theorem \ref{thm:main} shows that, with probability at
least $1-\delta$, the estimator \eqref{eq:mean-estimator} can effectively
achieve the error bound
\begin{align*}
	\norm{\hat{\mu}-\tg{\mu}} & \lesssim\max\left\{ \sqrt{\frac{\tr\left(\tg{\varSigma}\right)}{n}}+\sqrt{\frac{\norm{\tg{\varSigma}}_{\mr{op}}\log\left(1/\delta\right)}{n}},\left(\frac{\log\left(1/\delta\right)}{n}\right)^{2/3}\norm{\tg{\mu}}\right\} \,.
\end{align*}
The first argument of the maximum operator above is indeed the optimal
sub-Gaussian rate pertaining to the case of Hilbert spaces (see, e.g.,
\citep{LM19c}).

The setup we consider is essentially the same as the setup in \citep{LM19a}, where the \emph{median-of-means} approach of \citep{LM19c} is generalized for estimation of the mean in a normed space $\left(\mbb R^{d},\norm{\cdot}\right)$ for a general norm $\norm{\cdot}$. Specifically, given a prescribed accuracy parameter $\epsilon>0$ and a confidence level $\delta\in\left(0,1\right)$, the estimator $\hat{\mu}$
in \citep{LM19a} is shown to achieve $\norm{\hat{\mu}-\tg{\mu}}\le\epsilon$ with probability at least $1-\delta$, if
\begin{align}
	\epsilon & \gtrsim\frac{1}{\sqrt{n}}\max\left\{ C_{n},\E\norm{G}+\norm{\tg{\varSigma}}_{\mr{op}}\sqrt{\log\left(2/\delta\right)}\right\} \,,\label{eq:LM-bound}
\end{align}
where $C_{n}$ is defined as in \eqref{eq:Rademacher-complexity},
and $G\in\mbb R^{d}$ is a centered Gaussian vector whose covariance
matrix coincides with $\tg{\varSigma}$. It is also argued in \citep{LM19a}
that \eqref{eq:LM-bound} basically describes a nearly optimal accuracy level\footnote{The lower bound arguments in \citep{LM19a}, inspect the special case of an isotropic Gaussian distribution, and need to be adjusted for the infinite-dimensional setting where the covariance operator has to be nuclear.}, with the term $\E\norm{G}$, that can be interpreted as a certain \emph{Gaussian width}, being the source of any potential
suboptimality. It should be emphasized that the appearance of $\E\norm{G}$ does \emph{not} dramatically affect the final approximations. For instance, in the Euclidean setting, both $C_n$ and $\E\norm{G}$ are conveniently bounded from above by $\sqrt{\tr(\tg{\varSigma})}$; see, e.g., the arguments in \citep[proof of Lemma 2.2]{MZ20} in the context of covariance estimation). The achievable accuracy level of our proposed estimator
\eqref{eq:accuracy-level} has an undesirable dependence on $\norm{\tg{\mu}}$,
but it suggests that the term $\E\norm{G}$ in \eqref{eq:LM-bound} might
indeed be redundant. Furthermore, estimators based on the median-of-means framework, including the estimator proposed in \citep{LM19c}, lack robustness against adversarial manipulation as shown in \citep{RV19}. Below in Section \ref{ssec:adversarial-contamination} we show that our proposed estimator \eqref{eq:mean-estimator} can exhibit this notion of robustness, albeit under certain conditions inflicted by the lack of \emph{shift-equivariance}.

Another closely related estimator is studied by \citet{Min18}. Leveraging (approximate) symmetrization of the distribution by ``batch averaging'', as in the median-of-means estimators, together with tempering, as in the Catoni's estimator \citep{Cat12,CG17}, a class of mean
estimators is proposed in \citep{Min18} that approach the optimal estimation rates. Requiring only existence of the second moment, these estimators achieve a rate that essentially has the desired form \eqref{eq:ideal} except for an extra term expressed by an intricate function governing the convergence rate in the corresponding central limit theorem. This extra term enters the bound through an exchange of the average of each batch of the samples by a Gaussian with matching first and second moments, and approximating the difference in the expectations by integrating the difference of the tail probabilities. The simplified upper bounds of the rate provided in \citep{Min18} are, however, in terms of central moment of order $2+\tau$,
for some $\tau\in(0,1]$. The accuracy level established in \citep{Min18} is roughly
\begin{align*}
	\epsilon & = O\left(\frac{C_n}{\sqrt{n}}+\sqrt{\frac{\norm{\tg{\varSigma}}_{\mr{op}}\log(2/\delta)}{n}}
	+ \sqrt{\frac{\norm{\tg{\varSigma}}_{\mr{op}}}{n^{1+\tau}}}\right)\,,
\end{align*}
that is extended to the adversarial contamination setting as well.

Leveraging the generality of Banach space, our results can be adapted to covariance estimation as well. The approach will be almost identical to the approach of \citet{MZ20} who adapted the guarantees for the multivariate median-of-means estimator in normed spaces \citep{LM19b} for covariance estimation. We do not intend to do the detailed derivations here, but we discuss the general ideas in connection with the results of \citep{MZ20}. Consider i.i.d. samples $(Y_i)_i$ of a
\emph{symmetrically distributed} random vector $Y\in \mbb R^d$, and set $(X_i=Y_iY_i^\T)_i$  and $X=YY^\T$. Since $Y$ is symmetric, $\E Y=0$, and the covariance matrix of $Y$ is simply $\varSigma_Y \defeq \E\left(YY^\T\right) = \E X$. Of importance for us is a setting considered in \citep{MZ20}, where $Y$ is assumed to have a finite fourth moment, and the $L_4$--$L_2$ norm equivalence that requires
\begin{align*}
	\left(\E (\inp{v,Y}^4)\right)^{1/4} & \le L \left(\E (\inp{v,Y}^2)\right)^{1/2}\,,
\end{align*}
for all $v\in \mbb R^d$, and some absolute constant $L>0$. Denote the spectral norm (or Schatten $\infty$-norm) by $\norm{\cdot}_{S_\infty}$, and let $\mfk{r}$ be  the \emph{effective rank} of $\varSigma_Y$ defined as
\[
	\mfk{r} \defeq \frac{\tr(\varSigma_Y)}{\norm{\varSigma_Y}_{S_\infty}}\,.
\]
Using the truncated samples $(X^{(\beta)}_i \defeq X_i\bbone_{\{\norm{X_i}_{S_\infty}\le \beta^2\}})_{i=1}^n$ for an appropriate truncation parameter $\beta >0$, \citep{MZ20} constructs an estimator $\hat{\varSigma}_{\mr{MZ}}$ for $\varSigma_Y$ that for $n\gtrsim_L \mfk{r}\log\mfk{r} + \log(1/\delta)$, with probability at least $1-\delta$, satisfies
\begin{align}
	\norm{\hat{\varSigma}_{\mr{MZ}} - \varSigma_Y}_{S_\infty} & \lesssim_L \norm{\varSigma_Y}_{S_\infty}\sqrt{\frac{\mfk{r}\log(\mfk{r})}{n}} + R^{(\beta)}\sqrt{\frac{\log(1/\delta)}{n}}\,, \label{eq:MZ-bound}
\end{align}
where $R^{(\beta)}=\sup\left\{\E\left(\left(v^\T(X_1^{(\beta)} - \E X_1^{(\beta)})u\right)^2\right)\st \norm{u}_2\le 1,\norm{v}_2\le 1\right\}\lesssim_L \norm{\varSigma_Y}_{S_\infty}$. The error bound \eqref{eq:MZ-bound} is obtained by approximating the relevant Rademacher complexity and operator norm in the context of covariance estimation \citep[proof of Lemma 2.2]{MZ20}.
The same approximations can be used to extract a similar guarantee from Theorem \ref{thm:main} for our estimator \eqref{eq:mean-estimator} reinterpreted in the context of covariance estimation. Interestingly, our results can reproduce \eqref{eq:MZ-bound}, up to some factors of $\log(1/\delta)$, as the ``mean-dependent'' term in
\eqref{eq:accuracy-level} effectively reduces to $(\log(1/\delta)/n)^{2/3}\norm{\varSigma_Y}_{S_\infty}$, which is particularly negligible if $R^{(\beta)}\gtrsim \norm{\varSigma_Y}_{S_\infty}$. The iterative scheme described below in Section \ref{ssec:improve-shift-equivariance} eliminates the gap between these results entirely.

\subsection{Making the estimator oblivious to $\epsilon$}

Theorem \ref{thm:main} suggests that our proposed estimator requires
the knowledge of an accuracy level $\epsilon$ obeying \eqref{eq:accuracy-level}.
Below we describe an approach to eliminate this requirement. With
minor modifications, the same argument also applies to the mean estimation
under adversarial contamination considered below in Section \ref{ssec:adversarial-contamination}.

For $t>0$, define $g_{t}\st\mbb B\to\mbb R_{\ge0}$ as
\begin{align*}
	g_{t}\left(\mu\right) & \defeq\frac{n}{11\log\left(1/\delta\right)}\sup_{w\st\norm{w}_{*}\le\frac{22\log\left(1/\delta\right)}{nt}}w\left(\mu\right)-\mr{Im}\left(\varphi_{n}\left(w\right)\right)\,.
\end{align*}
Furthermore, for every $t>0$, we consider a certain sublevel set
of the function $g_{t}\left(\cdot\right)$ denoted by
\begin{align*}
	\mc M_{t} & =\left\{ \mu\in\mbb B\st g_{t}\left(\mu\right)\le1\right\} \,.
\end{align*}
Because $g_{t}\left(\cdot\right)$ is a convex and coercive function,
the corresponding sublevel sets $\mc M_{t}$ are compact and convex.
Furthermore, the fact that $g_{t}(\mu)$ is decreasing in $t>0$,
implies that the sets $\mc M_{t}$ are monotonic, i.e., for $s\le t$
we have
\begin{align*}
	\mc M_{s} & \subseteq\mc M_{t}\,.
\end{align*}
Furthermore, in view of Proposition \ref{prop:master-prop} below,
for every $t>0$ the diameter of $\mc M_{t}$ with respect to $\norm{\cdot}$
is at most $t$. Theorem \ref{thm:main} basically guarantees that,
with probability at least $1-\delta$, for any $\epsilon$ satisfying
\eqref{eq:accuracy-level} we have $\tg{\mu}\in\mc M_{\epsilon}\subseteq\mc M_{2\epsilon}$.
The fact that $\left|\mr{Im}\left(\varphi_{n}\left(w\right)\right)\right|\le1$
for all $w\in\mbb B^{*}$, implies that for every $t>0$ we have
\begin{align*}
	\mc M_{t} & \subseteq\left\{ \mu\st\left\lVert \mu\right\rVert \le\frac{t}{2}\left(\frac{n}{11\log\left(1/\delta\right)}+1\right)\right\} \,.
\end{align*}
Only the following two mutually exclusive scenarios may occur. In
the first scenario, for all $t>0$ we have $0\in\mc M_{t}$, which
is equivalent to having
\begin{align*}
	\sup_{w\in\mbb B^{*}}\mr{Im}\left(\varphi_{n}\left(w\right)\right) & \le\frac{11\log\left(1/\delta\right)}{n}\,.
\end{align*}
Therefore, we can choose the estimator $\hat{\mu}=0\in\mc M_{\epsilon}$
to achieve the error $\norm{\hat{\mu}-\tg{\mu}}\le\epsilon$. In the
second scenario, for some $s>0$ we should have $\mc M_{s}=\varnothing$.
Therefore, by monotonicity of the sets $\mc M_{t}$, we can choose
$\epsilon_{0}>0$ such that $\mc M_{\epsilon_{0}}\ne\varnothing$,
but $\mc M_{\epsilon_{0}/2}=\varnothing$. This $\epsilon_{0}$ would
satisfy $\epsilon_{0}\le2\epsilon$, thus any estimator $\hat{\mu}\in\mc M_{\epsilon_{0}}\subseteq\mc M_{2\epsilon}$
meets the error bound $\norm{\hat{\mu}-\tg{\mu}}\le2\epsilon$.

\subsection{Computational considerations}
As shown by Proposition \ref{prop:master-prop}, the estimator \eqref{eq:mean-estimator}
is effectively a convex program. However, this convex program, as
is, does not appear to admit a computationally tractable solver even
under the finite dimensional setting. Because \eqref{eq:mean-estimator}
does not involve polynomials, the sum-of-squares machinery, which
provided tractable approximations for some other mean estimators (e.g.,
the multivariate median-of-means \citep{Hop18,CFB19}), does not seem
to be useful either. Finding an exact or approximate numerical solver
for \eqref{eq:mean-estimator} or similar estimators that may implicitly use the characteristic function, in a finite dimensional setting is an interesting problem for future research. In particular, computationally efficient robust mean estimation, with respect to a tractable, but otherwise general, norm is still an open problem.

\subsection{Building a shift-equivariant estimator}\label{ssec:improve-shift-equivariance}

The nuisance term that depends on $\norm{\tg{\mu}}$ in \eqref{eq:accuracy-level-contamination} is mainly due
to the lack of shift-equivariance in $\hat{\mu}$. If $\norm{\tg{\mu}}\lesssim n^{1/6}\left(C_{n}+\sqrt{\norm{\tg{\varSigma}}_{\mr{op}}\log\left(1/\delta\right)}\right)/\log^{2/3}\left(1/\delta\right)$,
then the estimator still achieves the purely sub-Gaussian rate
\begin{align*}
	\norm{\hat{\mu}-\tg{\mu}} & \le\epsilon=O\left(\frac{C_{n}}{\sqrt{n}}+\sqrt{\frac{\norm{\tg{\varSigma}}_{\mr{op}}\log\left(1/\delta\right)}{n}}\right)\,.
\end{align*}
Otherwise, the $\norm{\tg{\mu}}$-dependent term in \eqref{eq:accuracy-level},
despite vanishing at the rate $n^{-2/3}=o(n^{-1/2})$, prevents $\hat{\mu}$
from achieving the desired purely sub-Gaussian behavior.

However, this drawback can be remedied to a great extent, if we use \eqref{eq:mean-estimator}
to refine another estimator iteratively by ``descending''
with respect to $\norm{\tg{\mu}}$. The following corollary shows that this recursive procedure can effectively produce estimators with the desired purely sub-Gaussian rate, under a minimal assumption on the confidence level.
\begin{cor}
	Suppose that we are given $\hat{\mu}^{(0)}$ as a crude estimator of $\tg{\mu}$. Furthermore, with $\epsilon^{(0)} \ge \norm{\hat{\mu}^{(0)}-\tg{\mu}}$, we are given the sequence $\left(\epsilon^{(k)}\right)_k$ that obeys
	\begin{align*}
		\epsilon^{(k)} & \ge\max\left\{ \frac{96\,C_{n}+12\sqrt{\norm{\tg{\varSigma}}_{\mr{op}}\log\left(1/\delta\right)}}{\sqrt{n}},9\left(\frac{\log\left(1/\delta\right)}{n}\right)^{2/3}\epsilon^{(k-1)}\right\} \,,
	\end{align*}
	and choose
	\begin{align*}
		r^{(k)}_{n} & =\frac{22\log\left(1/\delta\right)}{n\epsilon^{(k)}}\,.
	\end{align*}
	Let $\varphi_n^{(k)}(w)\defeq \exp(-\imath w(\hat{\mu}^{(k)}))\varphi_n(w)$ denote the empirical characteristic function with respect to the translated samples $(X_i-\mu^{(k)})_i$, where the corresponding estimators $\hat{\mu}^{(k)}$ are recursively defined as
	\begin{align*}
		\hat{\mu}^{(k)}\ \in\  & \argmin_{\mu}\ \sup_{w\st\norm{w}_{*}\le r^{(k)}_{n}}\ {\left(r^{(k)}_{n}\right)}^{-1}\left|\inp{w\st[,]\mu-\hat{\mu}^{\left(k-1\right)}}-\mr{Im}\left(\varphi_{n}^{(k-1)}\left(w\right)\right)\right|\,.
	\end{align*}
	Then, with probability at least $1-\delta$, simultaneously for all $k=1,2,\dotsc$, we have
	\begin{align}
		\norm{\hat{\mu}^{(k)}-\tg{\mu}} & \le \epsilon^{(k)}\,.\label{eq:recursive-estimate}
	\end{align}
	In particular, if
	\begin{align*}
		n & \ge \max\left\{\epsilon^{-2}{\left(96\,C_n+12\sqrt{\norm{\tg{\varSigma}}_{\mr{op}}\log(1/\delta)}\right)}^2,30\log(1/\delta) \right\}\,,
	\end{align*}
	we can choose the sequence $(\epsilon^{(k)})_k$ such that
	\begin{align}
		\norm{\hat{\mu}^{(k)}-\tg{\mu}} & \le \max\left\lbrace\epsilon,{\left(\frac{9}{10}\right)}^{2k/3}\epsilon^{(0)}\right\rbrace \label{eq:recursive-estimate-explicit}
	\end{align}
	with probability at least $1-\delta$.
	\label{cor:recursive-estimate}
\end{cor}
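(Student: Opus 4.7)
The plan is to view each step of the recursion as a fresh application of Theorem \ref{thm:main} to samples translated by the current estimate, and then to invoke that theorem on a single shift-invariant high-probability event shared across all iterations.

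With the change of variables $\mu = \nu + \hat{\mu}^{(k-1)}$, the optimization defining $\hat{\mu}^{(k)}$ becomes
\[
\hat{\mu}^{(k)} - \hat{\mu}^{(k-1)} \in \argmin_{\nu}\ \sup_{w\,:\,\norm{w}_{*}\le r_n^{(k)}}\ \bigl(r_n^{(k)}\bigr)^{-1}\bigl|w(\nu) - \mr{Im}(\varphi_n^{(k-1)}(w))\bigr|\,,
\]
which is exactly the Theorem \ref{thm:main} estimator applied to the translated samples $(X_i - \hat{\mu}^{(k-1)})_{i=1}^n$. The translated distribution has mean $\tg{\mu}-\hat{\mu}^{(k-1)}$ and, because centering cancels the translation, the same Rademacher complexity $C_n$ and covariance operator $\tg{\varSigma}$ as the original. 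Consequently, the error $\norm{\hat{\mu}^{(k)}-\tg{\mu}}$ is exactly the estimation error promised by Theorem \ref{thm:main} for the shifted problem with $\norm{\tg{\mu}}$ replaced by $\norm{\tg{\mu}-\hat{\mu}^{(k-1)}}$.

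The crucial point is that the $1-\delta$ event underpinning Theorem \ref{thm:main}, namely the concentration of $\varphi_n(\cdot)-\varphi(\cdot)$ from Proposition \ref{prop:CF-concentration}, depends on $X_1,\dotsc,X_n$ only through the shift-invariant modulus $|\varphi_n(w)-\varphi(w)|$, since translating all samples by any common vector $a$ multiplies both $\varphi_n(w)$ and $\varphi(w)$ by $e^{-\imath w(a)}$. Hence a single event of probability at least $1-\delta$ suffices to validate Theorem \ref{thm:main} simultaneously for every translation and every choice of $r_n>0$. On this event I would prove \eqref{eq:recursive-estimate} by induction on $k$: the base case $k=0$ is the given hypothesis $\epsilon^{(0)}\ge\norm{\hat{\mu}^{(0)}-\tg{\mu}}$; for the inductive step, the inequality $\norm{\tg{\mu}-\hat{\mu}^{(k-1)}}\le\epsilon^{(k-1)}$ combined with the stated lower bound on $\epsilon^{(k)}$ is precisely condition \eqref{eq:accuracy-level} for the shifted problem (with $\norm{\tg{\mu}}$ replaced by $\epsilon^{(k-1)}$ and $r_n$ replaced by $r_n^{(k)}$), so Theorem \ref{thm:main} delivers $\norm{\hat{\mu}^{(k)}-\tg{\mu}}\le\epsilon^{(k)}$.

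For \eqref{eq:recursive-estimate-explicit}, I would set $\epsilon^{(k)} = \max\bigl\{\epsilon,\,(9/10)^{2/3}\epsilon^{(k-1)}\bigr\}$. The hypothesis $n \ge \epsilon^{-2}\bigl(96 C_n + 12\sqrt{\norm{\tg{\varSigma}}_{\mr{op}}\log(1/\delta)}\bigr)^2$ makes $\epsilon$ dominate the first argument in the required lower bound, while $n \ge 30\log(1/\delta)$ gives $9(\log(1/\delta)/n)^{2/3}\le 9/30^{2/3}=(9/10)^{2/3}$ (use $30^{2/3}=3^{2/3}\cdot 10^{2/3}$ to get $9/30^{2/3}=9^{2/3}/10^{2/3}$), so that the chosen $\epsilon^{(k)}$ also dominates the second argument; the choice is thus admissible. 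Unrolling the recursion produces $\epsilon^{(k)}\le\max\bigl\{\epsilon,\,(9/10)^{2k/3}\epsilon^{(0)}\bigr\}$, which together with \eqref{eq:recursive-estimate} yields \eqref{eq:recursive-estimate-explicit}. The principal obstacle is precisely the simultaneity across iterations: without verifying that the good event of Proposition \ref{prop:CF-concentration} is a single shift-invariant event rather than one depending on the translation, one would have to union-bound over iterations and forfeit a factor in the confidence at every step, invalidating the infinite-depth recursion.
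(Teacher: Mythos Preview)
Your argument is correct and mirrors the paper's proof essentially line for line: the shift-invariance of $|\varphi_n(w)-\varphi(w)|$ makes the Proposition~\ref{prop:CF-concentration} event common to all translations, Theorem~\ref{thm:main} is then applied inductively, and the explicit choice $\epsilon^{(k)}=\max\{\epsilon,(9/10)^{2/3}\epsilon^{(k-1)}\}$ together with your verification $9/30^{2/3}=(9/10)^{2/3}$ is exactly the paper's (which records the same condition as $27\log(1/\delta)/n\le 9/10$). If anything, your write-up is more detailed than the paper's two-sentence proof.
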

\begin{proof}
	The fact that $|\exp(-\imath w(x))\varphi_n(w)-\exp(-\imath w(x))\varphi(w)|=|\varphi_n(w)-\varphi(w)|$ for all $x\in \mbb B$, implies that Proposition \ref{prop:CF-concentration} also applies simultaneously to all translated samples $\left(X_i-x\right)_i$, without diminishing the confidence level. It then becomes evident from the proof of Theorem \ref{thm:main}, that the theorem also applies simultaneously to all translated samples. In particular, the bound \eqref{eq:recursive-estimate} follows immediately by applying Theorem \ref{thm:main} recursively.

	Under the prescribed sample complexity, we have $27\log(1/\delta)/n \le 9/10$, using which we can choose
	\begin{align*}
		\epsilon^{(j)} & = \max\left\lbrace \epsilon, \left(\frac{9}{10}\right)^{2/3}\epsilon^{(j-1)} \right\rbrace\,,
	\end{align*}
	to obtain \eqref{eq:recursive-estimate-explicit} as desired.
\end{proof}

As an example let us consider again the case where $\mbb B$ is a Hilbert space for which $C_n\le\sqrt{\tr(\tg{\varSigma})}$. Let $\hat{\mu}^{\left(0\right)}$
to be the \emph{geometric median-of-means} \citep{Min15} of $n$ samples,
which achieves the accuracy $\norm{\hat{\mu}^{\left(0\right)}-\tg{\mu}}\lesssim\sqrt{\tr\left(\tg{\varSigma}\right)\log\left(2/\delta\right)}n^{-1/2}$ with probability at least $1-\delta/2$. If $\delta \ge 2\exp(-n/30)$, then with only $n$ additional samples, the procedure described by Corollary \ref{cor:recursive-estimate} can produce estimates $\hat{\mu}^{\left(1\right)},\hat{\mu}^{\left(2\right)},\dotsc$, that satisfy
\begin{align*}
	\norm{\hat{\mu}^{\left(k\right)}-\tg{\mu}} &
	\lesssim\max\left\lbrace \frac{\sqrt{\tr(\tg{\varSigma})}+\sqrt{\norm{\tg{\varSigma}}_{\mr{op}}\log(2/\delta)}}{\sqrt{n}},\left(\frac{9}{10}\right)^{2k/3}\sqrt{\frac{\tr\left(\tg{\varSigma}\right)\log(2/\delta)}{n}}\right\rbrace\,,
\end{align*}
with probability at least $1-\delta$. These estimators are effectively shift-equivariant, and clearly has a better accuracy compared to the geometric median-of-means estimator. Furthermore, for $k\gtrsim\log\log(2/\delta)$ we can have $(9/10)^{(4k/3)}\log(2/\delta)\le 1$ and $\hat{\mu}^{(k)}$ achieves the desired purely sub-Gaussian rate.

\subsection{Robustness in adversarial settings}\label{ssec:adversarial-contamination}

It is natural to expect that the estimator \eqref{eq:mean-estimator}
exhibits some form of robustness, because it accesses the data samples
through a bounded function (i.e., the empirical characteristic function).
The following corollary provides a precise guarantee for robustness
of the estimator under  the \emph{strong contamination} model
(see, e.g., \citep[Definition 1.1]{DK19}).

\begin{cor}[Robustness against strong contamination]
	Given a constant $\eta\in\left[0,1/2\right)$, suppose that a malicious adversary replaces the clean samples $X_{1},\dotsc,X_{n}$
	by the ``contaminated'' samples $\tilde{X}_{1},\dotsc,\tilde{X}_{n}$,
	with the only constraint that we have $\tilde{X}_{i}\ne X_{i}$ for at most $\eta n$ indices $i=1,\dotsc,n$.
	Let $\tilde{\varphi}_{n}\left(w\right)$ denote the empirical characteristic
	function with respect to the contaminated samples, i.e.,
	\begin{align*}
		\tilde{\varphi}_{n}\left(w\right) & \defeq\frac{1}{n}\sum_{i=1}^{n}e^{\imath\,w\left(\tilde{X}_{i}\right)}\,,
	\end{align*}
	and consider the mean estimator
	\begin{align*}
		\tilde{\mu} & \in\argmin_{\mu}\ \sup_{w\st\norm{w}_{*}\le r_{n}}\ r_{n}^{-1}\left|w\left(\mu\right)-\mr{Im}\left(\tilde{\varphi}_{n}\left(w\right)\right)\right|\,.
	\end{align*}
	With $C_{n}$ defined as in \eqref{eq:Rademacher-complexity} and
	for a confidence level $\delta\in\left(0,1\right)$, let $\epsilon>0$
	denote a prescribed level of accuracy that obeys
	\begin{align}
		\epsilon & \ge\max\Bigg\{\frac{96\,C_{n}+12\sqrt{\norm{\tg{\varSigma}}_{\mr{op}}\log\left(1/\delta\right)}}{\sqrt{n}}+8\sqrt{\eta\norm{\tg{\varSigma}}_{\mr{op}}},\,\left(19\eta+\frac{26\log\left(1/\delta\right)}{n}\right)^{2/3}\norm{\tg{\mu}}\Bigg\}\,.
		\label{eq:accuracy-level-contamination}
	\end{align}
	Then, for $r_{n}$ chosen as
	\begin{align*}
		r_{n} & =\frac{16\eta}{\epsilon}+\frac{22\log\left(1/\delta\right)}{n\epsilon}\,,
	\end{align*}
	the estimator $\tilde{\mu}$
	satisfies
	\begin{align*}
		\norm{\tilde{\mu}-\tg{\mu}} & \le\epsilon\,,
	\end{align*}
	with probability at least $1-\delta$.
	\label{cor:contamination-robustness}
\end{cor}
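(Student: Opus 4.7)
The plan is to re-run the proof of Theorem~\ref{thm:main} with the contaminated empirical characteristic function $\tilde\varphi_n$, treating the adversary as a uniformly controlled perturbation of $\varphi_n$. Let $I\subseteq\{1,\dotsc,n\}$ index the (at most $\eta n$) corrupted samples; then for every $w\in\mbb B^*$,
\[
	|\mr{Im}(\tilde\varphi_n(w))-\mr{Im}(\varphi_n(w))|\ \le\ \frac{1}{n}\sum_{i\in I}|\sin(w(\tilde X_i))|\ +\ \frac{1}{n}\sum_{i\in I}|\sin(w(X_i))|.
\]
I would bound the first sum by $\eta$ using $|\sin|\le 1$. For the second sum, I would instead use the sharper $|\sin(w(X_i))|\le|w(X_i)|$, apply Cauchy-Schwarz on the $|I|\le\eta n$ terms, and invoke the pointwise estimate $\frac{1}{n}\sum_i w(X_i)^2\lesssim\norm{w}_*^2(\norm{\tg\varSigma}_{\mr{op}}+\norm{\tg\mu}^2)$, valid on a high-probability event by standard concentration. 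After supremizing over $\norm{w}_*\le r_n$ and dividing by $r_n$, this produces a total adversarial slack of order $\eta/r_n+\sqrt{\eta\norm{\tg\varSigma}_{\mr{op}}}+\sqrt\eta\,\norm{\tg\mu}$.

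With this uniform slack in hand, I would mirror the proof of Theorem~\ref{thm:main}: evaluating the contaminated objective at $\tg\mu$ upper-bounds it by the clean objective at $\tg\mu$ plus the slack; by optimality of $\tilde\mu$ the same bound transfers to the contaminated objective at $\tilde\mu$; and applying the slack once more converts it into a bound on the clean objective at $\tilde\mu$, which---through the inequalities of Theorem~\ref{thm:main} relating the clean objective to $\norm{\tilde\mu-\tg\mu}$---yields the analog of the master bound \eqref{eq:master} with two extra summands on the right-hand side, of orders $\eta/r_n$ and $\sqrt{\eta\norm{\tg\varSigma}_{\mr{op}}}$, respectively. The prescribed choice $r_n=16\eta/\epsilon+22\log(1/\delta)/(n\epsilon)$ then works out as a balancing exercise: the $16\eta/\epsilon$ summand of $r_n$ absorbs the new $\eta/r_n$ term, the $22\log(1/\delta)/(n\epsilon)$ summand absorbs the existing $\log(1/\delta)/(nr_n)$ term, the $\sqrt{\eta\norm{\tg\varSigma}_{\mr{op}}}$ contribution is covered by the $8\sqrt{\eta\norm{\tg\varSigma}_{\mr{op}}}$ summand in \eqref{eq:accuracy-level-contamination}, and the mean-dependent $r_n^2\norm{\tg\mu}^3$ term is covered by the second argument of the maximum there since $r_n\epsilon\le 19\eta+26\log(1/\delta)/n$.

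The only real obstacle I foresee is the refined control of the adversarial slack. A black-box $|\sin|\le 1$ estimate on both sums would deliver a slack of only $2\eta/r_n$, which when balanced against the existing $r_n\norm{\tg\varSigma}_{\mr{op}}$ term via AM-GM already produces a $\sqrt{\eta\norm{\tg\varSigma}_{\mr{op}}}$-type penalty, but with a worse constant and without cleanly absorbing the adversarial contribution to the mean-dependent term. Splitting $I$ into the adversarial part (bounded by $\eta$) and the removed clean part (handled via second-moment concentration as above) is the single place where the argument goes beyond a mechanical application of Theorem~\ref{thm:main}, and it is where the sharp constants $8\sqrt{\eta\norm{\tg\varSigma}_{\mr{op}}}$ and $19\eta$ appearing in \eqref{eq:accuracy-level-contamination} come from.
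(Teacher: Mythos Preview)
Your overall structure---bound the adversarial slack $|\mr{Im}(\tilde\varphi_n-\varphi_n)|$ uniformly, then re-run the proof of Theorem~\ref{thm:main}---is exactly what the paper does. But your diagnosis of where the constants $8$, $19$, $26$ come from is wrong, and the ``refinement'' you build around that diagnosis does not work under the stated hypotheses.

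The paper uses precisely the black-box bound you dismiss: $|\sin|\le 1$ on \emph{both} sums gives $|\mr{Im}(\tilde\varphi_n(w)-\varphi_n(w))|\le 2\eta$ uniformly in $w$, hence an additive $4\eta/r_n$ in the analogue of \eqref{eq:master}. The $8\sqrt{\eta\norm{\tg\varSigma}_{\mr{op}}}$ in \eqref{eq:accuracy-level-contamination} then arises from requiring $16\eta\norm{\tg\varSigma}_{\mr{op}}/\epsilon\le\epsilon/4$ (one piece of the $r_n\norm{\tg\varSigma}_{\mr{op}}$ term), and the $19\eta+26\log(1/\delta)/n$ arises from requiring $\tfrac{1}{3}r_n^2\norm{\tg\mu}^3\le\epsilon/4$ with $r_n\epsilon=16\eta+22\log(1/\delta)/n$; the ratios $16/19$ and $22/26$ are just what is needed for $(r_n\epsilon)^2/(19\eta+26\log(1/\delta)/n)^2\le 3/4$. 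So the mean-dependent term \emph{is} cleanly absorbed by the black-box route, with exactly the stated constants.

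Your refined route, by contrast, has a genuine gap. Bounding $\tfrac{1}{n}\sum_{i\in I}|\sin(w(X_i))|$ via Cauchy--Schwarz and then asserting $\tfrac{1}{n}\sum_{i=1}^n w(X_i)^2\lesssim\norm{w}_*^2(\norm{\tg\varSigma}_{\mr{op}}+\norm{\tg\mu}^2)$ ``on a high-probability event by standard concentration'' requires uniform-in-$w$ control of an empirical second moment, which under only a finite second moment for $X$ (the sole assumption here) is not available; you would need at least a fourth-moment or $L_4$--$L_2$ hypothesis. Moreover, even granting that step, your slack carries a $\sqrt{\eta}\,\norm{\tg\mu}$ term, which for small $\eta$ is \emph{worse} than the $\eta^{2/3}\norm{\tg\mu}$ scaling that \eqref{eq:accuracy-level-contamination} actually delivers. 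In short: drop the refinement, use the crude $2\eta$ slack, and the constants fall out exactly.
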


We make a few remarks on the result of the corollary before providing a proof. It is conventional to assume that the adversarial contamination rate $\eta$ is an absolute constant, not vanishing as $n\to \infty$. Therefore, for a sufficiently large $n$, the terms in \eqref{eq:accuracy-level-contamination} that involve $\eta$ will become dominant  and the prescribed accuracy bound should effectively satisfy
\begin{align*}
	\epsilon & \ge c\max\left\lbrace \sqrt{\eta\norm{\tg{\varSigma}}_\mr{op}},\eta^{2/3}\norm{\tg{\mu}}\right\rbrace\,,
\end{align*}
for some absolute constant $c>1$. This reveals that in the adversarial setting the dependence on $\norm{\tg{\mu}}$ can be more problematic than in the non-adversarial setting. For instance, we basically need to require $\eta\le c^{-3/2}$; otherwise we will have $\epsilon > \norm{\tg{\mu}}$, which trivially can be attained instead by zero as the estimator. The non-vanishing $\eta$ also significantly diminishes the effectiveness of the schemes for improving shift-equivariance, described previously in Section \ref{ssec:improve-shift-equivariance}. Nevertheless, if we are \emph{not} operating in a high signal-to-noise ratio regime in the sense that
\begin{align*}
	\norm{\tg{\mu}} & \lesssim {\left(\eta+\frac{\log(1/\delta)}{n}\right)}^{-1/6}\sqrt{\norm{\tg{\varSigma}}_\mr{op}}\,,
\end{align*}
then \eqref{eq:accuracy-level-contamination} reduces to
\begin{align*}
	\epsilon & \gtrsim \frac{C_n+\sqrt{\norm{\tg{\varSigma}}_\mr{op}\log(1/\delta)}}{\sqrt{n}} + \sqrt{\eta \norm{\tg{\varSigma}}_\mr{op}}
\end{align*}
which, for instance, matches the accuracy of the multivariate \emph{trimmed-mean} estimator established in \citep{LM19} for the special case where $\mbb B = \mbb R^d$ is equipped with the standard Euclidean norm.

If $X$ has a lighter tail, the trimmed-mean estimator \citep{LM19} as well as the estimator proposed in \citep{Min18} are shown to achieve a level of accuracy with a better dependence on $\eta$. The trimmed-mean estimator achieves an error of order $\eta\sqrt{\log(1/\eta)\norm{\tg{\varSigma}}_{\mr{op}}}$ if $X$ is sub-Gaussian \citep{LM19}. Furthermore, \citep{Min18} provides a general accuracy guarantee under adversarial contamination \citep[Corollary 3.1]{Min18}. Specializing this general result to the case of mean estimation in Euclidean spaces, under the conditions that
\begin{align*}
	\kappa_{2+\tau} & \defeq\sup_{w\st\norm{w}\le1}\frac{\E\left(\left|\inp{w,X-\tg{\mu}}\right|^{2+\tau}\right)}{\left(\var\left|\inp{w,X-\tg{\mu}}\right|\right)^{1+\tau/2}}\,,
\end{align*}
is finite, and the contamination level $\eta$ is restricted as $\eta\in\left(1/n,1/2\right)$, the estimator of \citep{Min18} is shown to
achieve the error bound
\begin{align*}
	\norm{\hat{\mu}-\tg{\mu}} & \lesssim\sqrt{\frac{\tr\left(\tg{\varSigma}\right)}{n}}+\sqrt{\lambda_{\max}\left(\tg{\varSigma}\right)}\left(\sqrt{\frac{\log\left(2/\delta\right)}{n}}+\eta^{\left(1+\tau\right)/\left(2+\tau\right)}\kappa_{2+\tau}^{1/\left(2+\tau\right)}\right) \\
	                          & \hphantom{\lesssim}+\eta^{-1/\left(2+\tau\right)}\kappa_{2+\tau}^{1/\left(2+\tau\right)}\frac{\log\left(2/\delta\right)}{n}\,,
\end{align*}
with probability at least $1-\delta$. These improvements in the error bound are significant for smaller values of $\eta$, particularly for $\eta = o(1)$. Our analysis, however, does not provide these refinements on the dependence on $\eta$.

\begin{proof}[Proof of Corollary \ref{cor:contamination-robustness}]
	It follows from Proposition \ref{prop:master-prop}, stated and proved
	below in Section \ref{sec:Proofs}, that
	\begin{align*}
		\norm{\tilde{\mu}-\tg{\mu}} & \le2r_{n}^{-1}\sup_{w\st\norm{w}_{*}\le r_{n}}\left|w\left(\tg{\mu}\right)-\mr{Im}\left(\tilde{\varphi}_{n}\left(w\right)\right)\right|
	\end{align*}
	Furthermore, with $\varphi_{n}\left(w\right)$ being the empirical
	characteristic function with respect to the clean data, by the triangle
	inequality we have
	\begin{equation}
		\begin{aligned}
			\sup_{w\st\norm{w}_{*}\le r_{n}}\left|w\left(\tg{\mu}\right)-\mr{Im}\left(\tilde{\varphi}_{n}\left(w\right)\right)\right| & \le\sup_{w\st\norm{w}_{*}\le r_{n}}\left|w\left(\tg{\mu}\right)-\mr{Im}\left(\varphi_{n}\left(w\right)\right)\right|                                               \\
			                                                                                                                          & \hphantom{\le}+\sup_{w\st\norm{w}_{*}\le r_{n}}\left|\mr{Im}\left(\varphi_{n}\left(w\right)\right)-\mr{Im}\left(\tilde{\varphi}_{n}\left(w\right)\right)\right|\,.
		\end{aligned}
		\label{eq:remove-contamination}
	\end{equation}
	Clearly,
	\begin{align}
		\sup_{w\st\norm{w}_{*}\le r_{n}}\left|\mr{Im}\left(\varphi_{n}\left(w\right)\right)-\mr{Im}\left(\tilde{\varphi}_{n}\left(w\right)\right)\right| & =\sup_{w\st\norm{w}_{*}\le r_{n}}\left|\frac{1}{n}\sum_{i\st\tilde{X}_{i}\ne X_{i}}\sin\left(w\left(\tilde{X}_{i}\right)\right)-\sin\left(w\left(X_{i}\right)\right)\right|\nonumber \\
		                                                                                                                                                 & \le2\eta\,.\label{eq:contamination-bound}
	\end{align}
	As shown in the proof of Theorem \ref{thm:main} below
	in Section \ref{subsec:Proof-of-Theorem}, for any arbitrary $r_{n}>0$,
	with probability at least $1-\delta$ we have
	\begin{align*}
		      & 2r_{n}^{-1}\sup_{w\st\norm{w}_{*}\le r_{n}}\ \left|w(\tg{\mu})-\mr{Im}\left(\varphi_{n}\left(w\right)\right)\right|                                                                                                               \\
		\le\  & \frac{24\,C_{n}}{\sqrt{n}}+\sqrt{\frac{8\norm{\tg{\varSigma}}_{\mr{op}}\log\left(1/\delta\right)}{n}}+\frac{16\log\left(1/\delta\right)}{3nr_{n}}+\frac{1}{3}r_{n}^{2}\norm{\tg{\mu}}^{3}+r_{n}\norm{\tg{\varSigma}}_{\mr{op}}\,.
	\end{align*}
	Therefore, on the same event, we
	deduce from \eqref{eq:remove-contamination} and \eqref{eq:contamination-bound}
	that
	\begin{equation}
		\begin{aligned}\norm{\tilde{\mu}-\tg{\mu}} & \le\frac{24\,C_{n}}{\sqrt{n}}+\sqrt{\frac{8\norm{\tg{\varSigma}}_{\mr{op}}\log\left(1/\delta\right)}{n}}+r_{n}^{-1}\left(4\eta+\frac{16\log\left(1/\delta\right)}{3n}\right) \\
			                            & \hphantom{\le}+\frac{1}{3}r_{n}^{2}\norm{\tg{\mu}}^{3}+r_{n}\norm{\tg{\varSigma}}_{\mr{op}}\,.
		\end{aligned}
		\label{eq:unspecialized-accuracy-contamination}
	\end{equation}
	In view of \eqref{eq:accuracy-level-contamination}, and for the
	prescribed value for $r_{n}$, we have the inequalities
	\begin{align*}
		\frac{24\,C_{n}}{\sqrt{n}}+\sqrt{\frac{8\norm{\tg{\varSigma}}_{\mr{op}}\log\left(1/\delta\right)}{n}} & \le\frac{\epsilon}{4}\,,
	\end{align*}
	\begin{align*}
		r_{n}^{-1}\left(4\eta+\frac{16\log\left(1/\delta\right)}{3n}\right) & \le\frac{\epsilon}{4}\,,
	\end{align*}
	\begin{align*}
		\frac{1}{3}r_{n}^{2}\norm{\tg{\mu}}^{3} & \le\frac{1}{3}\left(16\eta+\frac{22\log\left(1/\delta\right)}{n}\right)^{2}\left(19\eta+\frac{26\log\left(1/\delta\right)}{n}\right)^{-2}\epsilon\le\frac{\epsilon}{4}\,,
	\end{align*}
	and
	\begin{align*}
		r_{n}\norm{\tg{\varSigma}}_{\mr{op}} & \le\frac{\epsilon}{4}\,,
	\end{align*}
	that together with \eqref{eq:unspecialized-accuracy-contamination}
	yields the desired bound.
\end{proof}

\subsection{Related work}

Estimation of the mean of random variables from finite samples is
a classic problem in statistics, which has recently resurrected with
two new perspectives on robustness of the estimators. The primary
concern in the first perspective is robustness to heavy-tailed distributions.
An ideal estimator replicates the optimal statistical rate achieved
by the sample mean in the case of sub-Gaussian distributions, for
distributions that might \emph{not} have moments of order greater
than two \citep{LO11,Cat12,HS14,Min15,CG17,Min18,LM19c,LM19a,LM19,DL19}.
The second perspective on mean estimation focuses on robustness against
outliers, perhaps contrived by an adversary \citep{LRV16,DKK+17,DKK+19,CDG19}.
Recent activity in this area originated from the computer science
community who studied various models of data contamination and set
a higher priority for computational efficiency. Here, we only summarize
some of the results in the recent literature that are most relevant
to our work, and refer the interested readers to survey papers \citep{LM19b}
and \citep{DK19} for historical notes and an in-depth review of the
literature.

A natural approach to gain robustness in estimation is \emph{tempering}
of the tail of the distribution through nonlinear transformation of
samples. This approach is used, for instance, in \citep{Cat12,CG17}
to construct computationally tractable estimators. Using PAC-Bayesian
arguments, these estimators are shown to achieve the optimal sub-Gaussian
rate except for the dependence on the raw second moment rather than
the centered moment in the multi- or infinite-dimensional setting.

The \emph{median-of-means} estimator \citep{NY83,JVV86,AMS96,LO11}
achieves the desired purely sub-Gaussian rate in the univariate case
(see, e.g., \citep{LM19c}). Several generalizations are studied in
the literature by defining appropriate notions of the median in a
multidimensional setting. \emph{Geometric median}, that generalizes
the variational characterization of the univariate median, is used
in \citep{HS14,Min15} to create a multivariate median-of-means estimator.
For example, for random variables in a separable Hilbert space, the
geometric median-of-means achieves the estimation rate of the order
$\sqrt{\tr\left(\tg{\varSigma}\right)\log\left(1/\delta\right)}n^{-1/2}$
(see, \citep[Corollary 4.1 and subsequent remarks]{Min15}). The first
multivariate mean estimator with purely sub-Gaussian behavior is proposed
by \citet{LM19c}. This estimator is also a multivariate median-of-means
estimator in which the median of a set of points is defined to aggregate,
in a certain way, the medians of the projections of the points onto
every possible direction. A computationally tractable \emph{semidefinite relaxation}
for this median-of-means estimator, that enjoys the same
sub-Gaussian statistical rate, is first proposed by \citet{Hop18} based
on ideas from the sum-of-squares hierarchies. Building upon similar
ideas, an iterative estimator is proposed and analyzed by \citep{CFB19}
that has a significantly lower computational cost. The spectral algorithm by \citep{LLVZ20} has further reduced the computational cost and attained a running time of $O(n^2 d)$.

In the adversarial setting, a robust mean estimator is proposed in \citep{CDG19} that operates
in $\mbb R^{d}$ and relies on a semidefinite program to attenuate
the effect of the outliers planted by an adversary. With at most $\eta n$
contaminated samples, this estimator has a runtime of $\tilde{O}\left(nd\right)/\mr{poly}\left(\eta\right)$
and guarantees $\norm{\hat{\mu}-\tg{\mu}}_2=O\left(\sqrt{\eta\lambda_{\max}\left(\tg{\varSigma}\right)}\right)$,
though only with a constant probability. Inspired by the ideas in
\citep{CFB19} and \citep{CDG19}, the mean estimator proposed in
\citep{DL19} has a nearly linear runtime as function of $nd$, achieves
the sub-Gaussian rate, and is robust to outliers, albeit for a vanishing
level of contamination.

A multivariate trimmed-mean estimator is studied in \citep{LM19}
and shown to achieve the optimal sub-Gaussian accuracy rate in the non-adversarial setting,
and be gracefully robust against
adversarial contamination of the samples. This estimator basically
aggregates all of the directional ``trimmed'' empirical means in which
the majority of the projected samples between two quantiles, determined
by the level of contamination, are kept intact while the rest are
saturated.

\section{\label{sec:Proofs}Proofs }

In this section we prove the main theorem and the propositions that
it depends on.

\subsection{\label{subsec:Proof-of-Theorem}Proof of Theorem \ref{thm:main}}

Theorem \ref{thm:main} relies on the following proposition, that
is also used implicitly as a crucial step in the argument of \citep{LM19c}.
A more explicit usage of this result also appeared in \citep[proof of Corollary 3.1]{Min18} and \citep{LLVZ20}.
Proposition \ref{prop:master-prop} below, provides a slightly more
general variant of the mentioned results. Proof of this proposition
is straightforward and is provided below in Section \ref{subsec:prop1-proof}
for completeness.

\begin{prop}
	\label{prop:master-prop}
	Given a function $f:\mbb B^{*}\to\mbb R$ define $f^{*}:\mbb B\to\mbb R$ as
	\begin{align*}
		f^{*}\left(\theta\right) & \defeq\sup_{w\st\left\lVert w\right\rVert _{*}\le1}\ \left|f\left(w\right)-w\left(\theta\right)\right|\,,
	\end{align*}
	and let
	\begin{align*}
		\theta_{\min} & \in\argmin_{\theta}\:f^{*}\left(\theta\right)\,.
	\end{align*}
	Then, for all $\theta\in\mbb B$ we have
	\begin{align}
		\left\lVert \theta_{\min}-\theta\right\rVert & \le2f^{*}\left(\theta\right)\,.\label{eq:master-bound}
	\end{align}
	Furthermore, if the function $f$ is odd, i.e.,
	$f\left(-w\right)=-f\left(w\right)$ for all $w\in\mbb B^{*}$, the
	function $f^{*}$ is the convex conjugate of the function $f$ over
	the canonical unit ball of $\mbb B^{*}$.
\end{prop}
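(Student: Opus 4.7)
The plan is to reduce the first claim to the dual characterization of the norm on $\mbb B$, together with a triangle inequality and the defining optimality of $\theta_{\min}$. Recall that for any Banach space $\mbb B$, one has
\begin{align*}
	\|x\| & = \sup_{w\st\|w\|_*\le 1}|w(x)|\,,
\end{align*}
which is essentially Hahn--Banach. This means the quantity we are trying to bound, $\|\theta_{\min}-\theta\|$, can be rewritten as a supremum over $\{w\st\|w\|_*\le 1\}$ of $|w(\theta_{\min}-\theta)|$, and therefore brought into direct contact with $f^*$.

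For the first part, I would fix an arbitrary $w\in\mbb B^*$ with $\|w\|_*\le 1$ and write
\begin{align*}
	|w(\theta_{\min}-\theta)| & = |(f(w)-w(\theta))-(f(w)-w(\theta_{\min}))| \le f^*(\theta)+f^*(\theta_{\min})
\end{align*}
by the triangle inequality and the definition of $f^*$. Taking the supremum over $\|w\|_*\le 1$ then yields $\|\theta_{\min}-\theta\|\le f^*(\theta)+f^*(\theta_{\min})$. Finally, the optimality of $\theta_{\min}$ gives $f^*(\theta_{\min})\le f^*(\theta)$, which upgrades the bound to the advertised $2f^*(\theta)$. This part is essentially a one-line triangle-inequality argument and should not present any real obstacle.

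For the second part, I would exploit symmetry of the unit ball $\{w\st\|w\|_*\le 1\}$ under negation together with oddness of $f$. Substituting $w\mapsto -w$ shows that
\begin{align*}
	\sup_{\|w\|_*\le 1}(f(w)-w(\theta)) & = \sup_{\|w\|_*\le 1}(w(\theta)-f(w))\,,
\end{align*}
so the two branches of the absolute value contribute the same value. Consequently,
\begin{align*}
	f^*(\theta) & = \sup_{\|w\|_*\le 1}(w(\theta)-f(w))\,,
\end{align*}
which is exactly the Legendre--Fenchel conjugate of $f$ restricted (via indicator) to the canonical dual unit ball. I expect no serious difficulty anywhere in this proposition; the only thing worth double-checking is that the dual characterization of the norm used in the first step is applied with the correct normalization, so that the factor $2$ in \eqref{eq:master-bound} comes out precisely and not, say, with an unnecessary $r_n$ scaling.
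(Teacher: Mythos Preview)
Your proposal is correct and follows essentially the same argument as the paper: both use the dual characterization $\|x\|=\sup_{\|w\|_*\le 1}|w(x)|$ together with the triangle inequality to obtain $\|\theta_{\min}-\theta\|\le f^*(\theta_{\min})+f^*(\theta)$, and then invoke optimality of $\theta_{\min}$ to replace $f^*(\theta_{\min})$ by $f^*(\theta)$; the treatment of the odd case via the symmetry $w\mapsto -w$ is also identical.
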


It follows from Proposition \ref{prop:master-prop}, by choosing $f\left(w\right)=r_{n}^{-1}\mr{Im}\left(\varphi_{n}(r_{n}\,w)\right)$,
that
\begin{align*}
	\norm{\hat{\mu}-\tg{\mu}} & \le2r_{n}^{-1}\sup_{w\st\norm{w}_{*}\le r_{n}}\left|w\left(\tg{\mu}\right)-\mr{Im}\left(\varphi_{n}\left(w\right)\right)\right|\,.
\end{align*}
Then, by the triangle inequality we obtain
\begin{align}
	\norm{\hat{\mu}-\tg{\mu}} & \le2r_{n}^{-1}\sup_{w\st\norm{w}_{*}\le r_{n}}\left|\mr{Im}\left(\varphi_{n}\left(w\right)-\varphi\left(w\right)\right)\right|+2r_{n}^{-1}\sup_{w\st\norm{w}_{*}\le r_{n}}\left|w\left(\tg{\mu}\right)-\mr{Im}\left(\varphi\left(w\right)\right)\right|\,.\label{eq:decomposition}
\end{align}

To upper bound the first supremum on the right-hand side of \eqref{eq:decomposition},
we use the following proposition that is adapted from a more general
result in \citep{Bou02}. The proof is provided below in Section \ref{subsec:prop2-proof}
for completeness.

\begin{prop}[Uniform concentration of Empirical Characteristic Function]
	With $C_{n}$ defined by \eqref{eq:Rademacher-complexity}, we have
	\begin{align}
		\sup_{w\st \norm{w}\le r_{n}}\left|\varphi_{n}(w)-\varphi(w)\right| & \le r_{n}\left(\frac{12\,C_{n}}{\sqrt{n}}+\sqrt{\frac{2\,\norm{\tg{\varSigma}}_{\mr{op}}\log\left(1/\delta\right)}{n}}\right)+\frac{8\log\left(1/\delta\right)}{3n}\,.\label{eq:CF-simple-tail-bound}
	\end{align}
	with probability \textup{at least} $1-\delta$.\label{prop:CF-concentration}
\end{prop}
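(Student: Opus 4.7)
The plan is to recast the supremum of the complex-valued empirical process as the supremum of a real-valued, uniformly bounded empirical process, and then to apply the Talagrand--Bousquet concentration inequality from \citep{Bou02}. Via the identity $|z|=\sup_{\theta\in[0,2\pi)}\mr{Re}(e^{-\imath\theta}z)$, I rewrite
\begin{equation*}
	\sup_{\norm{w}_{*}\le r_{n}}\left|\varphi_{n}(w)-\varphi(w)\right|=\sup_{\norm{w}_{*}\le r_{n},\,\theta}\ \frac{1}{n}\sum_{i=1}^{n}\bigl(h_{w,\theta}(X_{i})-\E h_{w,\theta}(X)\bigr),
\end{equation*}
with $h_{w,\theta}(x)\defeq\cos(w(x)-\theta)$. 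Each $h_{w,\theta}$ is uniformly bounded by $1$, so the centered summands are bounded by $2$ in absolute value, supplying the uniform-bound parameter for Bousquet.

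Next I would verify the weak-variance ingredient. The crucial observation is shift-invariance: a joint translation of the samples by $\tg\mu$ multiplies both $\varphi_{n}(w)$ and $\varphi(w)$ by the common unit-modulus factor $e^{-\imath w(\tg\mu)}$, leaving the modulus of their difference unchanged. I may therefore assume $\tg\mu=0$. Then, taking $c=h_{w,\theta}(0)=\cos\theta$ in the variational bound $\var(Y)\le\E(Y-c)^{2}$ and using $|\cos a-\cos b|\le|a-b|$,
\begin{equation*}
	\var\bigl(h_{w,\theta}(X)\bigr)\le\E\bigl[w(X)^{2}\bigr]=w(\tg\varSigma w)\le\norm{w}_{*}^{2}\norm{\tg\varSigma}_{\mr{op}}\le r_{n}^{2}\norm{\tg\varSigma}_{\mr{op}}.
\end{equation*}

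The main obstacle is controlling the expected supremum $\E Z$. After a standard symmetrization step (paying a factor of $2$), I would resolve the $\theta$-supremum explicitly via the identity $\sup_{\theta}|a\cos\theta+b\sin\theta|=\sqrt{a^{2}+b^{2}}$, which converts
\begin{equation*}
	\sup_{\theta}\Bigl|\sum_{i=1}^{n}\varepsilon_{i}\cos(w(X_{i})-\theta)\Bigr|\ =\ \Bigl|\sum_{i=1}^{n}\varepsilon_{i}e^{\imath w(X_{i})}\Bigr|.
\end{equation*}
Splitting the complex exponential into its real and imaginary components and applying Talagrand's contraction principle to the $1$-Lipschitz maps $u\mapsto\cos u-1$ and $u\mapsto\sin u$ (both vanishing at the origin) replaces the trigonometric Rademacher averages by the linear Rademacher complexity $\E\sup_{\norm{w}_{*}\le r_{n}}|\sum_{i}\varepsilon_{i}w(X_{i})|$, which by duality is bounded by $r_{n}\E\norm{\sum_{i}\varepsilon_{i}(X_{i}-\tg\mu)}=r_{n}\sqrt{n}\,C_{n}$. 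Tracking constants carefully through symmetrization and contraction yields $\E Z\lesssim r_{n}C_{n}/\sqrt{n}$.

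Finally I would assemble the bound by invoking Bousquet's inequality with variance $\sigma^{2}=r_{n}^{2}\norm{\tg\varSigma}_{\mr{op}}$, uniform bound $b=2$, and the expectation estimate above; apply $\sqrt{\alpha+\beta}\le\sqrt{\alpha}+\sqrt{\beta}$ to separate the $\sigma^{2}$ and $\E Z$ contributions inside the Bousquet square-root; and use an elementary AM-GM step to fold the residual $\E Z$ term back into a constant multiple of $\E Z$. Rearranging produces \eqref{eq:CF-simple-tail-bound} with the stated numerical constants.
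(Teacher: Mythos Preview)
Your proposal is essentially the paper's proof: both recast $|\varphi_n-\varphi|$ as a real-valued bounded empirical process via a phase supremum, exploit shift-invariance to bound the weak variance by $r_n^2\norm{\tg\varSigma}_{\mr{op}}$, apply Bousquet's inequality, and control $\E Z$ by symmetrization plus Rademacher contraction on $\cos(\cdot)-1$ and $\sin(\cdot)$. The cosmetic differences (you use $\mr{Re}$ and $h_{w,\theta}(x)=\cos(w(x)-\theta)$ where the paper uses $\mr{Im}$ and $\sin(w(x-\tg\mu)+b)$; you bound the variance via $\E(Y-c)^2$ where the paper uses an independent copy) are immaterial.

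One ordering detail deserves care, however. You symmetrize first, then resolve the $\theta$-supremum to obtain $\bigl|\sum_i\varepsilon_i e^{\imath w(X_i)}\bigr|$, and only afterward split into real and imaginary parts. But the real part you get is $\sum_i\varepsilon_i\cos(w(X_i))$, not $\sum_i\varepsilon_i(\cos(w(X_i))-1)$; passing from the former to the latter costs an additive $\bigl|\sum_i\varepsilon_i\bigr|$, whose expectation is of order $\sqrt{n}$ and contributes a stray $O(n^{-1/2})$ term that does \emph{not} carry the factor $r_n$ and therefore does not fit the form of \eqref{eq:CF-simple-tail-bound}. The paper sidesteps this by splitting $|\varphi_n-\varphi|$ into its cosine and sine parts \emph{before} symmetrizing, so that the constant $1$ is absorbed into the centering, $\cos(\cdot)-\E\cos(\cdot)=(\cos(\cdot)-1)-\E(\cos(\cdot)-1)$, and symmetrization produces $\sum_i\varepsilon_i(\cos(w(X_i-\tg\mu))-1)$ directly. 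With that small reordering your argument coincides with the paper's and recovers the constant $12$.
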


It is worth mentioning that the term in the bound \eqref{eq:CF-simple-tail-bound} that decays as $n^{-1}$ is conservative. Some refinements can be achieved through techniques such as \emph{peeling} (see, e.g., \citep[Section 13.7]{BLM13}) or \emph{generic chaining} \citep{Tal14} (see also \citep{Dir15} and \citep[Section 8.5]{Ver18}). However, these refinements either do not substantially affect the overall accuracy of the estimator, or lead to quantities such as the previously mentioned Gaussian width that we intended to avoid.

Applying Proposition \ref{prop:CF-concentration} to \eqref{eq:decomposition},
with probability at least $1-\delta$, we have
\begin{equation}
	\begin{aligned}\norm{\hat{\mu}-\tg{\mu}} & \le\frac{24\,C_{n}}{\sqrt{n}}+\sqrt{\frac{8\,\norm{\tg{\varSigma}}_{\mr{op}}\log\left(1/\delta\right)}{n}}+\frac{16r_{n}^{-1}\log\left(1/\delta\right)}{3n} \\
		                          & \phantom{\le}+2r_{n}^{-1}\sup_{w\st\norm{w}_{*}\le r_{n}}\left|w\left(\tg{\mu}\right)-\mr{Im}\left(\varphi\left(w\right)\right)\right|\,.
	\end{aligned}
	\label{eq:decomposition+}
\end{equation}
Therefore, it suffices to upper bound the remaining supremum in \eqref{eq:decomposition+}.
Using the triangle inequality we have
\begin{equation}
	\begin{aligned}                    & \sup_{w\st\norm{w}_{*}\le r_{n}}\left|w\left(\tg{\mu}\right)-\mr{Im}\left(\varphi\left(w\right)\right)\right|                                                                                                          \\
		 & \le\sup_{w\st\norm{w}_{*}\le r_{n}}\left|w\left(\tg{\mu}\right)-\sin\left(w\left(\tg{\mu}\right)\right)\right|+\left|\E\left(\sin\left(w\left(X\right)\right)-\sin\left(w\left(\tg{\mu}\right)\right)\right)\right|\,.
	\end{aligned}
	\label{eq:CF-convex-conjugate-bound}
\end{equation}
In the current setting that $X$ is assumed to have a finite second moment, the terms on the right-hand side of \eqref{eq:CF-convex-conjugate-bound} can be bounded simply using the Taylor approximation of the $\sin(\cdot)$ function. However, we use the following lemma, proved below in Section \ref{subsec:lem-proof}, that is convenient to use even if $X$ has a stricter moment condition.

\begin{lem}
	For every $\alpha,\beta\in\mbb R$, and $p,q\in\left[0,1\right]$
	with $p+q>0$, we have
	\begin{align*}
		\text{\ensuremath{\left|\sin\alpha-\sin\beta-\left(\alpha-\beta\right)\cos\beta\right|}} & \le\frac{\left|\alpha-\beta\right|^{p+q+1}}{2^{p+q-1}\left(p+q+1\right)}+\frac{q\left|\alpha-\beta\right|^{p+q}\left|\beta\right|}{2^{p+q-2}\left(p+q\right)}\,.
	\end{align*}
	\label{lem:sin-linear-approximation}
\end{lem}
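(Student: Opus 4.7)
The plan is to rewrite the left-hand side via the angle-addition identity. With $h=\alpha-\beta$ and $\sin(\beta+h)=\sin\beta\cos h+\cos\beta\sin h$, we obtain
\[
\sin\alpha-\sin\beta-(\alpha-\beta)\cos\beta\ =\ \sin\beta\,(\cos h-1)\ +\ \cos\beta\,(\sin h-h)\,.
\]
The triangle inequality and $|\cos\beta|\le 1$ then reduce the lemma to bounding $|\sin h-h|$ and $|\sin\beta|\cdot|\cos h-1|$, which I would match to the first and second terms of the stated estimate, respectively.

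For $|\sin h-h|$, I would use the integral representation $\sin h-h=-\int_0^h 2\sin^2(t/2)\,dt$ together with the elementary inequality $|\sin x|\le|x|^r$ valid for every $r\in[0,1]$ (quickly verified by splitting on $|x|\le 1$ versus $|x|\ge 1$). Applying it twice, once with exponent $p$ and once with exponent $q$, yields $\sin^2(t/2)\le(|t|/2)^{p+q}$, and integrating produces exactly $|h|^{p+q+1}/[2^{p+q-1}(p+q+1)]$, which is the first term of the lemma. For the second piece, I would use $|\sin\beta|\le|\beta|$ together with $\cos h-1=-\int_0^h\sin t\,dt$ and estimate $\int_0^{|h|}|\sin t|\,dt$ via $|\sin t|\le|t|^{p+q-1}$ in the regime $p+q\ge 1$ (and via the trivial $|\sin t|\le 1$ for $p+q<1$), arriving at a $|\beta|$-dependent contribution of the correct order $|\beta|\,|h|^{p+q}$.

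The main technical obstacle is to pin down the precise coefficient $q/[(p+q)\,2^{p+q-2}]$ of the second term. The explicit $q$-factor---which vanishes as $q\to 0$, consistent with the lemma collapsing to the classical Taylor remainder $|h|^{p+1}/[2^{p-1}(p+1)]$ when $q=0$---is not delivered by a naive product bound $|\sin\beta|\,|\cos h-1|\le|\beta|\,|h|^{p+q}/2^{p+q-1}$, whose coefficient is $q$-independent and consequently loses to the lemma when $p>q$. Recovering the $q$-multiplier will require interpolating between the trivial $|\sin t|\le 1$ and the power-type $|\sin t|\le|t|^{p+q-1}$, or equivalently splitting cases according to the size of $|h|$; this is where the delicate accounting lives. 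Fortunately, in the parameter choices actually needed for Theorem~\ref{thm:main}, namely $(p,q)=(1,1)$ and $(p,q)=(1,0)$, both $|\cos h-1|\le h^2/2$ and $|\sin h-h|\le|h|^3/6$ are at hand and the second-term contribution reduces to $|h|^2|\beta|/2$ (or vanishes), so the argument needed inside the paper is completely elementary.
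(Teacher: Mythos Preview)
Your decomposition via the angle-addition identity, followed by the triangle inequality
\[
|\sin\alpha-\sin\beta-h\cos\beta|\ \le\ |\sin h-h|\ +\ |\sin\beta|\,|\cos h-1|\,,
\]
is where the argument actually breaks. The first piece does give the first term of the lemma exactly, as you show. But once the two pieces are separated, the second one is just a product $|\sin\beta|\,|\cos h-1|$, and no estimate of that product can manufacture the multiplicative factor $q$: whichever of your two routes you take, the resulting coefficient in front of $|\beta|\,|h|^{p+q}$ exceeds the lemma's $q/[2^{p+q-2}(p+q)]$ whenever $p>q$. In particular, for $(p,q)=(1,0)$---one of the two instances the paper actually invokes---your bound reads $|h|^2/2+|\sin\beta|\,|\cos h-1|$, which does \emph{not} collapse to the target $|h|^2/2$; the extra piece is genuinely positive. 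So the claim that this case ``vanishes'' within your scheme is incorrect, and the suggested ``interpolation'' or ``case-splitting on $|h|$'' cannot repair it, because the loss happens at the triangle inequality, not in the subsequent power bounds.

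The paper sidesteps this by never separating. It writes $\sin\alpha-\sin\beta-h\cos\beta=\int_\beta^\alpha(\cos u-\cos\beta)\,du$, uses the product-to-sum identity $\cos u-\cos\beta=-2\sin\tfrac{u-\beta}{2}\sin\tfrac{u+\beta}{2}$, bounds the two sines by $|\cdot|^p$ and $|\cdot|^q$ respectively, and then applies concavity of $t\mapsto t^q$ on $[0,\infty)$ to get
\[
\left|\tfrac{u+\beta}{2}\right|^{q}\ \le\ \left|\tfrac{u-\beta}{2}\right|^{q}+q\left|\tfrac{u-\beta}{2}\right|^{q-1}|\beta|\,.
\]
The factor $q$ is precisely the derivative of $t^q$, and this concavity step is the mechanism that produces it; integrating then delivers both terms of the lemma with the stated constants in one stroke. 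If you want to rescue your approach, you would have to forgo the triangle inequality and recombine $\cos\beta(\sin h-h)+\sin\beta(\cos h-1)=\int_0^h[\cos(\beta+t)-\cos\beta]\,dt$, which is exactly the paper's integral after the substitution $u=\beta+t$.
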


Applying Lemma \ref{lem:sin-linear-approximation} with $\alpha=w\left(\tg{\mu}\right)$,
$\beta=0$, and $p=q=1$ provides the bound
\begin{align}
	\left|w\left(\tg{\mu}\right)-\sin\left(w\left(\tg{\mu}\right)\right)\right| & \le\frac{1}{6}\left|w\left(\tg{\mu}\right)\right|^{3}\,.\label{eq:linear-sin-diff}
\end{align}
Invoking Lemma \ref{lem:sin-linear-approximation} once more with
$\alpha=w\left(X\right)$, $\beta=w\left(\tg{\mu}\right)$,
$p=1$, and $q=0$, we also have
\begin{align}
	\left|\E\left(\sin\left(w\left(X\right)\right)-\sin\left(w\left(\tg{\mu}\right)\right)\right)\right| & =\left|\E\left(\sin\left(w\left(X\right)\right)-\sin\left(w\left(\tg{\mu}\right)\right)-\cos\left(w\left(\tg{\mu}\right)\right)w\left(X-\tg{\mu}\right)\right)\right|\nonumber \\
	                                                                                                     & \le\frac{1}{2}\E\left({(w\left(X-\tg{\mu}\right))}^2\right)\nonumber                                                                                                           \\
	                                                                                                     & \le\frac{1}{2}\norm{w}_{*}^{2}\norm{\tg{\varSigma}}_{\mr{op}}\,,\label{eq:sin-diff-expectation}
\end{align}
where the first line holds simply because $w(\cdot)$ is linear, thereby $\E\left(w(X-\tg{\mu})\right)=0$.
Collecting \eqref{eq:CF-convex-conjugate-bound}, \eqref{eq:linear-sin-diff},
and \eqref{eq:sin-diff-expectation} we deduce that
\begin{align*}
	\sup_{w\st\norm{w}_{*}\le r_{n}}\left|w\left(\tg{\mu}\right)-\mr{Im}\left(\varphi\left(w\right)\right)\right| & \le\sup_{w\st\norm{w}_{*}\le r_{n}}\frac{1}{6}\left|w\left(\tg{\mu}\right)\right|^{3}+\frac{1}{2}\norm{w}_{*}^{2}\norm{\tg{\varSigma}}_{\mr{op}} \\
	                                                                                                              & \le\frac{1}{6}r_{n}^{3}\norm{\tg{\mu}}^{3}+\frac{1}{2}r_{n}^{2}\norm{\tg{\varSigma}}_{\mr{op}}\,.
\end{align*}
Therefore, in view of \eqref{eq:decomposition+}, the desired bound \eqref{eq:master} holds.

Because of \eqref{eq:accuracy-level} and the choice of
\begin{align*}
	r_{n} & =\frac{22\log\left(1/\delta\right)}{n\epsilon}\,,
\end{align*}
we have
\begin{align*}
	\frac{24\,C_{n}}{\sqrt{n}}+\sqrt{\frac{8\norm{\tg{\varSigma}}_{\mr{op}}\log\left(1/\delta\right)}{n}} & \le\frac{\epsilon}{4}\,,
\end{align*}
\begin{align*}
	\frac{16\log\left(1/\delta\right)}{3nr_{n}} & \le\frac{\epsilon}{4}\,,
\end{align*}
\begin{align*}
	\frac{1}{3}r_{n}^{2}\norm{\tg{\mu}}^{3} & \le\frac{{22}^{2}\log^{2}\left(1/\delta\right)}{3n^{2}\epsilon^{2}}\cdot\frac{n^{2}\epsilon^{3}}{9^{3}\log^{2}\left(1/\delta\right)}\le\frac{\epsilon}{4}\,,
\end{align*}
and
\begin{align*}
	r_{n}\norm{\tg{\varSigma}}_{\mr{op}} & \le\frac{22\log\left(1/\delta\right)}{n\epsilon}\frac{n\epsilon^{2}}{12^{2}\log\left(1/\delta\right)}\le\frac{\epsilon}{4}\,,
\end{align*}
thereby the derived error bound reduces to
\begin{align*}
	\norm{\hat{\mu}-\tg{\mu}} & \le\epsilon\,.\qedhere
\end{align*}

\subsection{\label{subsec:prop1-proof}Proof of Proposition \ref{prop:master-prop}}

By definition
\begin{align*}
	f^{*}\left(\theta_{\min}\right) & \le f^{*}\left(\theta\right),
\end{align*}
for any $\theta\in\mathbb{B}$ which implies that
\begin{align*}
	f^{*}\left(\theta_{\min}\right)+f^{*}\left(\theta\right) & \le2f^{*}\left(\theta\right).
\end{align*}
Therefore, to prove \eqref{eq:master-bound} it suffices to show that
\begin{align*}
	f^{*}\left(\theta_{\min}\right)+f^{*}\left(\theta\right) & \ge\left\lVert \theta_{\min}-\theta\right\rVert \,.
\end{align*}
Using the definition of the function $f^{*}$, we can write
\begin{align*}
	f^{*}\left(\theta_{\min}\right)+f^{*}\left(\theta\right) & =\sup_{w\st\left\lVert w\right\rVert _{*}\le1}\left|f\left(w\right)-w\left(\theta_{\min}\right)\right|+\sup_{w\st\left\lVert w\right\rVert _{*}\le1}\left|f\left(w\right)-w\left(\theta\right)\right| \\
	                                                         & \ge\sup_{w\st\left\lVert w\right\rVert _{*}\le1}\left(\left|f\left(w\right)-w\left(\theta_{\min}\right)\right|+\left|f\left(w\right)-w\left(\theta\right)\right|\right)                               \\
	                                                         & \ge\sup_{w\st\left\lVert w\right\rVert _{*}\le1}\left|w\left(\theta\right)-w\left(\theta_{\min}\right)\right|                                                                                         \\
	                                                         & =\sup_{w\st\left\lVert w\right\rVert _{*}\le1}\left|w\left(\theta-\theta_{\min}\right)\right|                                                                                                         \\
	                                                         & =\left\lVert \theta-\theta_{\min}\right\rVert \,,
\end{align*}
which proves the desired inequality.

Because the canonical unit ball of $\mbb B^{*}$ is symmetric, if $f\left(-w\right)=-f\left(w\right)$ for
all $w\in\mbb B^{*}$, then we have
\begin{align*}
	f^{*}\left(\theta\right) & =\sup_{w\st\left\lVert w\right\rVert _{*}\le1}\left|f\left(w\right)-w\left(\theta\right)\right| \\
	                         & =\sup_{w\st\left\lVert w\right\rVert _{*}\le1}w\left(\theta\right)-f\left(w\right)\,,
\end{align*}
which is clearly the convex conjugate of the function $f$ over the
unit ball of $\mbb B^{*}$.

\subsection{\label{subsec:prop2-proof}Proof of Proposition \ref{prop:CF-concentration}}

We will use the uniform concentration bound due to Bousquet \citep{Bou02}
to prove the desired tail bound. Define
\begin{align*}
	Z_{n} & \defeq n\sup_{w\st\norm{w}_{*}\le r_{n}}\left|\varphi_{n}(w)-\varphi(w)\right|\,,
\end{align*}
and
\begin{align*}
	v_{n} & \defeq n r_{n}^{2}\norm{\tg{\varSigma}}_{\mr{op}}+4\E\left(Z_{n}\right)\,.
\end{align*}
Observe that the simple identity $\left|e^{-\imath w(\tg{\mu})}\varphi_n(w)-e^{-\imath w(\tg{\mu})}\varphi(w)\right| = \left|\varphi_n(w)-\varphi(w)\right|$, enables us operate on the centered samples $\left(X_i -\tg{\mu}\right)_i$ and express $Z_{n}$ equivalently as
\begin{align}
	Z_{n} & =\sup_{w\st\norm{w}_{*}\le r_{n}}\left|\sum_{i=1}^{n}e^{\imath\,w\left(X_{i}-\tg{\mu}\right)}-\E\left(e^{\imath\,w\left(X_{i}-\tg{\mu}\right)}\right)\right|\nonumber                                                  \\
	      & =\sup_{w,b\st\norm{w}_{*}\le r_{n}\st[,]\left|b\right|\le\pi}\left|\sum_{i=1}^{n}\sin\left(w\left(X_{i}-\tg{\mu}\right)+b\right)-\E\left(\sin\left(w\left(X_{i}-\tg{\mu}\right)+b\right)\right)\right|,\label{eq:ImZn}
\end{align}
where the second line follows from the identity $|z|=\sup_{|b|\le \pi}\,\mr{Im}\left(e^{\imath\, b}z\right)$.
Furthermore, with $X'$ denoting an independent copy of $X$, the
variance of the summands in the expression of $Z_{n}$ in \eqref{eq:ImZn} can be bounded
as
\begin{align*}
	 & \sup_{w,b\st\norm{w}_{*}\le r_{n}\st[,]\left|b\right|\le\pi}\var\left(\sin\left(w\left(X-\tg{\mu}\right)+b\right)\right)                                                                             \\
	 & =\sup_{w,b\st\norm{w}_{*}\le r_{n}\st[,]\left|b\right|\le\pi}\E\left(\left|\sin\left(w\left(X-\tg{\mu}\right)+b\right)-\E\left(\sin\left(w\left(X'-\tg{\mu}\right)+b\right)\right)\right|^{2}\right) \\
	 & \le \sup_{w,b\st\norm{w}_{*}\le r_{n}\st[,]\left|b\right|\le\pi}\frac{1}{2}\E\left(\left|\sin\left(w\left(X-\tg{\mu}\right)+b\right)-\sin\left(w\left(X'-\tg{\mu}\right)+b\right)\right|^{2}\right)  \\
	 & =\sup_{w\st\norm{w}_{*}\le r_{n}}2\E\left(\left|\sin\left(\frac{1}{2}w\left(X-X'\right)\right)\right|^{2}\right)                                                                                     \\
	 & \le\sup_{w\st\norm{w}_{*}\le r_{n}}\frac{1}{2}\E\left({\left|w\left(X-X'\right)\right|}^2\right)                                                                                                     \\
	 & =r_{n}^{2}\norm{\tg{\varSigma}}_{\mr{op}}\,.
\end{align*}
Therefore, recalling the definition of $v_{n}$ and noting that the
magnitudes of the summands in \eqref{eq:ImZn} are no more than $2$,
we can invoke \citep[Theorem 2.3]{Bou02} and show that
\begin{align}
	\P\left(Z_{n}>\E\left(Z_{n}\right)+\sqrt{2v_{n}\log\frac{1}{\delta}}+\frac{2}{3}\log\frac{1}{\delta}\right) & \le\delta\,.\label{eq:CF-tail-bound}
\end{align}

Furthermore, with $\varepsilon_{1},\dotsc,\varepsilon_{n}$ being
i.i.d. Rademacher random variables, and $C_{n}$ defined by \eqref{eq:Rademacher-complexity},
we have the chain of inequalities
\begingroup
\allowdisplaybreaks
\begin{align}
	\E\left(Z_{n}\right) & =\E\left(\sup_{w\st\norm{w}_{*}\le r_{n}}\left|\sum_{i=1}^{n}e^{\imath\,w\left(X_{i}-\tg{\mu}\right)}-\E\left(e^{\imath\,w\left(X_{i}-\tg{\mu}\right)}\right)\right|\right)\nonumber                         \\
	                     & \le\E\left(\sup_{w\st\norm{w}_{*}\le r_{n}}\left|\sum_{i=1}^{n}\cos\left(w\left(X_{i}-\tg{\mu}\right)\right)-\E\left(\cos\left(w\left(X_{i}-\tg{\mu}\right)\right)\right)\right|\right)\nonumber             \\
	                     & \hphantom{\le}+\E\left(\sup_{w\st\norm{w}_{*}\le r_{n}}\left|\sum_{i=1}^{n}\sin\left(w\left(X_{i}-\tg{\mu}\right)\right)-\E\left(\sin\left(w\left(X_{i}-\tg{\mu}\right)\right)\right)\right|\right)\nonumber \\
	                     & =\E\left(\sup_{w\st\norm{w}_{*}\le r_{n}}\left|\sum_{i=1}^{n}\cos\left(w\left(X_{i}-\tg{\mu}\right)\right)-1-\E\left(\cos\left(w\left(X_{i}-\tg{\mu}\right)\right)-1\right)\right|\right)\nonumber           \\
	                     & \hphantom{\le}+\E\left(\sup_{w\st\norm{w}_{*}\le r_{n}}\left|\sum_{i=1}^{n}\sin\left(w\left(X_{i}-\tg{\mu}\right)\right)-\E\left(\sin\left(w\left(X_{i}-\tg{\mu}\right)\right)\right)\right|\right)\nonumber \\
	                     & \le2\E\left(\sup_{w\st\norm{w}_{*}\le r_{n}}\left|\sum_{i=1}^{n}\varepsilon_{i}(\cos\left(w\left(X_{i}-\tg{\mu}\right)\right)-1)\right|\right) \nonumber                                                     \\
	                     & \hphantom{\le}+2\E\left(\sup_{w\st\norm{w}\le r_{n}}\sum_{i=1}^{n}\varepsilon_{i}\sin\left(w\left(X_{i}-\tg{\mu}\right)\right)\right)\nonumber                                                               \\
	                     & \le6\E\left(\sup_{w\st\norm{w}_{*}\le r_{n}}\sum_{i=1}^{n}\varepsilon_{i}w\left(X_{i}-\tg{\mu}\right)\right) =6r_{n}\sqrt{n}\,C_{n}\,,\label{eq:CF-expectation-bound}
\end{align}
\endgroup
where the standard Gin\'{e}-Zinn symmetrization \citep{GZ84}, \citep[Lemma 2.3.1]{vDVW96},
and the \emph{Rademacher contraction principle} \citep[Theorem 4.12]{LT13} are invoked to obtain the second and the third inequalities, respectively.

Putting \eqref{eq:CF-tail-bound}, \eqref{eq:CF-expectation-bound},
and the inequality
\begin{align*}
	 & \E\left(Z_{n}\right)+\sqrt{2nr_{n}^{2}\norm{\tg{\varSigma}}_{\mr{op}}\log\frac{1}{\delta}}+\sqrt{8\E\left(Z_{n}\right)\log\frac{1}{\delta}}+\frac{2}{3}\log\frac{1}{\delta} \\
	 & \le2\E\left(Z_{n}\right)+\sqrt{2nr_{n}^{2}\norm{\tg{\varSigma}}_{\mr{op}}\log\frac{1}{\delta}}+\frac{8}{3}\log\frac{1}{\delta}\,,
\end{align*}
together yields the tail bound \eqref{eq:CF-simple-tail-bound}.

\subsection{\label{subsec:lem-proof}Proof of Lemma \ref{lem:sin-linear-approximation}}

Since $\left|\sin\alpha-\sin\beta-\left(\alpha-\beta\right)\cos\beta\right|$
is an even function, without loss of generality, we may assume $\alpha\ge\beta$.
Then, we can write
\begin{align*}
	\left|\sin\alpha-\sin\beta-\left(\alpha-\beta\right)\cos\beta\right| & =\left|\int_{\beta}^{\alpha}\left(\cos u-\cos\beta\right)\ \d u\right|                                          \\
	                                                                     & =\left|\int_{\beta}^{\alpha}-2\sin\left(\frac{u-\beta}{2}\right)\sin\left(\frac{u+\beta}{2}\right)\ \d u\right| \\
	                                                                     & \le2\int_{\beta}^{\alpha}\left|\frac{u-\beta}{2}\right|^{p}\left|\frac{u+\beta}{2}\right|^{q}\ \d u\,,
\end{align*}
where the inequality follows from the fact that $\left|\sin t\right|\le\left|\sin t\right|^{r}\le\left|t\right|^{r}$
for all $t\in\mbb R$ and $r\in\left[0,1\right]$. Furthermore, by
concavity of the function $t\mapsto t^{q}$ over $t\ge0$, and the
triangle inequality we have
\begin{align*}
	\left|\frac{u+\beta}{2}\right|^{q} & \le\left|\frac{u-\beta}{2}\right|^{q}+q\left|\frac{u-\beta}{2}\right|^{q-1}\left(\left|\frac{u+\beta}{2}\right|-\left|\frac{u-\beta}{2}\right|\right) \\
	                                   & \le\left|\frac{u-\beta}{2}\right|^{q}+q\left|\frac{u-\beta}{2}\right|^{q-1}\left|\beta\right|\,.
\end{align*}
Recalling the assumption $p+q>0$, we conclude that
\begin{align*}
	\left|\sin\alpha-\sin\beta-\left(\alpha-\beta\right)\cos\beta\right| & \le2\int_{\beta}^{\alpha}\left|\frac{u-\beta}{2}\right|^{p}\left(\left|\frac{u-\beta}{2}\right|^{q}+q\left|\frac{u-\beta}{2}\right|^{q-1}\left|\beta\right|\right)\ \d u\,, \\
	                                                                     & =\frac{\left|\alpha-\beta\right|^{p+q+1}}{2^{p+q-1}\left(p+q+1\right)}+\frac{q\left|\alpha-\beta\right|^{p+q}\left|\beta\right|}{2^{p+q-2}\left(p+q\right)}\,.
\end{align*}

\subsection*{Acknowledgments}
We thank the anonymous reviewers for their careful and detailed feedback. We also thank Vladimir Koltchinskii for his helpful suggestions on an earlier version of this manuscript.

\bibliographystyle{abbrvnat}
\bibliography{refs}

\end{document}